\documentclass[letterpaper,11pt,oneside,reqno]{amsart}
\usepackage{amsfonts,amsmath, amssymb,amsthm,amscd}
\usepackage[usenames,dvipsnames]{color}
\usepackage{bm}
\usepackage{mathrsfs}
\usepackage{yfonts}
\usepackage[mpexclude,DIV13]{typearea}
\usepackage{verbatim}
\usepackage{hyperref}
\usepackage{graphicx}
\usepackage[latin1]{inputenc}
\usepackage{latexsym}
\usepackage{lscape}
\usepackage{epsfig}
\usepackage{subfigure}

\numberwithin{equation}{subsection}

\usepackage{setspace}
\include{bibtex}

\DeclareGraphicsRule{.tif}{png}{.png}{`convert #1 `basename
#1.tif`.png}

\begin{document}

\bibliographystyle{alpha}
\newcommand{\e}[0]{\epsilon}
\newcommand{\EE}{\ensuremath{\mathbb{E}}}
\newcommand{\PP}{\ensuremath{\mathbb{P}}}
\newcommand{\R}{\ensuremath{\mathbb{R}}}
\newcommand{\Rplus}{\ensuremath{\mathbb{R}_{+}}}
\newcommand{\C}{\ensuremath{\mathbb{C}}}
\newcommand{\Z}{\ensuremath{\mathbb{Z}}}
\newcommand{\N}{\ensuremath{\mathbb{N}}}
\newcommand{\Q}{\ensuremath{\mathbb{Q}}}
\newcommand{\Weyl}[0]{\ensuremath{\mathbb{W}}}
\newcommand{\YY}[0]{\ensuremath{\mathbb{Y}}}
\newcommand{\XX}[0]{\ensuremath{\mathbb{X}}}

\newcommand{\Real}{\ensuremath{\mathrm{Re}}}
\newcommand{\Imag}{\ensuremath{\mathrm{Im}}}
\newcommand{\re}{\ensuremath{\mathrm{Re}}}

\newcommand{\Sym}{\ensuremath{\mathrm{Sym}}}

\newcommand{\bfone}{\ensuremath{\mathbf{1}}}

\def \sgn {{\rm sgn}}
\newcommand{\var}{{\rm var}}

\newcommand{\Id}{\ensuremath{\mathrm{Id}}}
\renewcommand{\i}{\mathbf i}

\newtheorem{theorem}{Theorem}[section]
\newtheorem{partialtheorem}{Partial Theorem}[section]
\newtheorem{conj}[theorem]{Conjecture}
\newtheorem{lemma}[theorem]{Lemma}
\newtheorem{proposition}[theorem]{Proposition}
\newtheorem{corollary}[theorem]{Corollary}
\newtheorem{claim}[theorem]{Claim}
\newtheorem{formal}[theorem]{Critical point derivation}
\newtheorem{experiment}[theorem]{Experimental Result}
\newtheorem{prop}{Proposition}

\def\todo#1{\marginpar{\raggedright\footnotesize #1}}
\def\change#1{{\color{green}\todo{change}#1}}
\def\note#1{\textup{\textsf{\Large\color{blue}(#1)}}}

\theoremstyle{definition}
\newtheorem{remark}[theorem]{Remark}

\theoremstyle{definition}
\newtheorem{example}[theorem]{Example}

\theoremstyle{definition}
\newtheorem{definition}[theorem]{Definition}

\theoremstyle{definition}
\newtheorem{definitions}[theorem]{Definitions}

\begin{abstract}
We prove a intertwining relation (or Markov duality) between the $(q,\mu,\nu)$-Boson process and $(q,\mu,\nu)$-TASEP, two discrete time Markov chains introduced by Povolotsky in \cite{Pov}. Using this and a variant of the coordinate Bethe ansatz we compute nested contour integral formulas for expectations of a family of observables of the $(q,\mu,\nu)$-TASEP when started from step initial data. We then utilize these to prove a Fredholm determinant formula for distribution of the location of any given particle.
\end{abstract}

\title{The $(q,\mu,\nu)$-Boson process and $(q,\mu,\nu)$-TASEP}

\author[I. Corwin]{Ivan Corwin}
\address{I. Corwin, Columbia University,
Department of Mathematics,
2990 Broadway,
New York, NY 10027, USA,
and Clay Mathematics Institute, 10 Memorial Blvd. Suite 902, Providence, RI 02903, USA,
and Massachusetts Institute of Technology,
Department of Mathematics,
77 Massachusetts Avenue, Cambridge, MA 02139-4307, USA}
\email{ivan.corwin@gmail.com}

\maketitle
%

\section{Introduction}\label{s.intro}

We study the $(q,\mu,\nu)$-Boson process and the $(q,\mu,\nu)$-TASEP, two discrete time interacting particle systems introduced by Povolotsky \cite{Pov}. The primary contribution of this paper is the discovery of an intertwining relationship between the Markov transition matrices for these processes. This generalizes the Markov duality previously discovered in \cite{BCS,BCdiscrete} for the $q$-Boson process and $q$-TASEP (special cases of the processes considered herein corresponding to setting $\nu=0$). Utilizing the coordinate Bethe ansatz solvability discovered in \cite{Pov} and following the strategy laid out in \cite{BCS,BCdiscrete} for the $\nu=0$ case, we prove nested contour integral formulas for expectations of a large class of observables of the $(q,\mu,\nu)$-TASEP, when started from step initial data. These formulas completely characterize the fixed time distribution of the process, and by following the approach of \cite{BorCor} (see also \cite{BCGS}) we extract two Fredholm determinant formulas for the one-point distribution of $(q,\mu,\nu)$-TASEP with step initial data.

There are a variety of interesting interacting particle systems which arise through limits or specializations of parameters of the processes considered herein. The Fredholm determinant formula which we prove here is likely to be amenable to asymptotic analysis (such as that performed in \cite{BorCor,BCF,BCFV, FV} for the special cases of $q$-TASEP, the O'Connell-Yor semi-discrete directed polymer and the Kardar-Parisi-Zhang equation).

This introductory section contains most of the main results of the paper, in addition to the notation and background necessary to state and prove them (in particular a brief introduction to $q$-hypergeometric series and Heine's 1847 $q$-generalization of Gauss' summation formula). Section \ref{s.genparsection} provides a general parameter extension of the processes considered in the introduction, and shows that the intertwining relationship and some of the Bethe ansatz solvability extends to this general setting. It also includes proofs of those results stated without proofs in the introduction. Section \ref{s.discuss} contains discussion of some related results in the literature, possible extensions to the present results and new directions of research.

\subsection{Notation}\label{s.notation}
We write $\Z_{>0} = \{1,2,\ldots\}$ and $\Z_{\geq 0} = \{0,1,\ldots\}$.  For $N\in \Z_{>0}$ define
$$\YY^N := (\Z_{\geq 0})^{\{0,1,\ldots,N\}} \qquad \textrm{and}\qquad \YY^N_k := \big\{\vec{y}\in \YY^N: \sum y_i = k\big\}.$$
For a vector $\vec{y}\in \YY^N$ and a vector $\vec{s}=(s_1,\ldots, s_N)$ with integers $s_i\in \{0,\ldots, y_i\}$ for all $i$, let
$$\vec{y}^{s_i}_{i,i-1} = \big(y_0,\ldots, y_{i-1}+s_i, y_i -s_i,\ldots, y_N\big).$$

Define the Weyl chamber
$$\Weyl^N_k := \big\{\vec{n} = (n_1,\ldots, n_k): N\geq n_1\geq \cdots \geq n_k \geq 0\big\}.$$
To every $\vec{y}\in \YY^N_k$ we may associate the ordered list of locations $\vec{n}(\vec{y})\in \Weyl^N_k$ such that for each $i\in \{0,\ldots, N\}$ the number of elements of $\vec{n}$ equal to $i$ is exactly $y_i$. Likewise to $\vec{n}\in \Weyl^N_k$ we associate the uniques $\vec{y}=\vec{y}(\vec{n})\in \YY^N_k$ such that $\vec{n}(\vec{y}) = \vec{n}$. For example for $N\geq 2$, if $\vec{y}$ was such that $y_0=3$, $y_1=1$, $y_2=1$ and all other $y_i=0$, then $\vec{y}$ maps to $\vec{n}(\vec{y}) = (2,1,0,0,0)$. For $I\subseteq \{1,\ldots, k\}$ let $\vec{n}^{-}_{I}$ denote the the vector $\vec{n}$ with $n_i$ replaced by $n_i-1$ for all $i\in I$.

For $N\in \Z_{>0}$ define
$$\XX^N := \big\{\vec{x} = (x_0,x_1,\ldots,X_N) \subset \Z^N: +\infty=x_0>x_1>\cdots>x_N\big\}.$$
For a vector $\vec{x}\in \XX^N$ and a vector $\vec{j}=(j_1,\ldots, j_N)$ with integers $j_i\in \{0,\ldots, x_{i-1}-x_{i}-1\}$ for all $i$, let
$$\vec{x}^{j_i}_{i} := \big(x_1,\ldots, x_{i}+j_i, \ldots, x_N\big).$$

\subsection{Hypergeometric series}\label{s.hyper}
Fix $|q|<1$. Define the $q$-Pochhammer symbol
$$(a;q)_n := \prod_{i=0}^n (1-a q^i)\qquad  \textrm{and} \qquad (a;q)_{\infty} := \prod_{i=0}^{\infty} (1-a q^i).$$
We record three identities (which follow directly from the definition) satisfied by the $q$-Pochhammer symbol. These can be found as (1.2.30), (1.2.31) and (1.2.32) in \cite{Gaspar}.
\begin{align}\label{e.gasp}
\nonumber (A)\qquad& (a;q)_n = \frac{(a;q)_\infty}{(aq^n;q)_{\infty}},\\
(B)\qquad&(a^{-1} q^{1-n};q)_{n} = (a;q)_n (-a^{-1})^n q^{-\frac{n(n-1)}{2}},\\
\nonumber (C)\qquad& (a;q)_{n-k} = \frac{(a;q)_n}{(a^{-1} q^{1-n};q)_k} (-q a^{-1})^k q^{\frac{k(k-1)}{2} - nk}.
\end{align}

Define the basic $q$-hypergeometric series ${{_2\phi_1}}$ as
\begin{equation}\label{d.hypphi}
{_2\phi_1}(a,b;c;q,z) := \sum_{n=0}^{\infty} \frac{ (a;q)_{n} \,(b;q)_n}{(q;q)_n\, (c;q)_n} \,z^n.
\end{equation}
Since $|q|<1$, this is convergent for $|z|<1$. Heine's 1847 $q$-generalization of Gauss' summation formula \cite[Section 1.5]{Gaspar} states that
\begin{equation}\label{e.qgauss}
{_2\phi_1}(a,b;c;q,c/ab) = \frac{(c/a;q)_{\infty}\, (c/b;q)_{\infty}}{(c;q)_{\infty}\, (c/ab;q)_{\infty}},
\end{equation}
as long as $|c/ab|<1$. A special degeneration of this summation formula states that for any $n\geq 0$,
\begin{equation}\label{e.qgaussdeg}
{_2\phi_1}(q^{-n},b;c;q,q) = \frac{(c/b;q)_n}{(c;q)_n} b^n.
\end{equation}

\subsection{A $(q,\mu,\nu)$-deformed Binomial distribution}
We define the three parameter deformation of the Binomial distribution introduced in \cite{Pov} (see therein for references relating various applications for limits of this distribution). For $|q|<1$, $0\leq \nu\leq \mu<1$, and integers $0\leq j\leq m$ define the function
\begin{equation}\label{e.jumpdist}
\varphi_{q,\mu,\nu}(j|m) := \mu^j\, \frac{(\nu/\mu;q)_j (\mu;q)_{m-j}}{(\nu;q)_m}\,  \frac{(q;q)_m}{(q;q)_{j}(q;q)_{m-j}}.
\end{equation}
When $m=+\infty$, extend this definition by setting
\begin{equation}\label{e.jumpdistinfty}
\varphi_{q,\mu,\nu}(j|+\infty) := \mu^j\, \frac{(\nu/\mu;q)_j (\mu;q)_{\infty}}{(\nu;q)_{\infty}}\,  \frac{1}{(q;q)_{j}}.
\end{equation}

The following lemma shows that for each $m\in \Z_{\geq 0}\cup \{+\infty\}$ this defines a probability distribution on $j\in \{0,\ldots, m\}$.
\begin{lemma}\label{l.normalized}
Fix any choice of parameters $|q|<1$ and $0\leq \nu\leq \mu<1$. Then for $m\in \Z_{\geq 0}\cup \{+\infty\}$, the function $\varphi_{q,\mu,\nu}(j|m)$ defines a probability distribution over the set of $j\in \{0,\ldots, m\}$. Equivalently,
\begin{equation*}
\sum_{j=0}^m \varphi_{q,\mu,\nu}(j|m) = 1.
\end{equation*}
\end{lemma}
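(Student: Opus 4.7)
The plan is to write $S_m := \sum_{j=0}^m \varphi_{q,\mu,\nu}(j|m)$ as a basic hypergeometric series ${_2\phi_1}$ in the summation index $j$ and evaluate it using the degenerate $q$-Gauss identity \eqref{e.qgaussdeg}.

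For finite $m$, the problem is that the summand contains the ``bad'' Pochhammer symbols $(\mu;q)_{m-j}$ and $(q;q)_{m-j}$, whose indices depend on $m-j$ rather than $j$. I would first apply identity (C) of \eqref{e.gasp} to each of these (with $n=m$, $k=j$, and $a=\mu$, respectively $a=q$). The prefactors $q^{j(j-1)/2 - mj}$ cancel between numerator and denominator, and after collecting terms the sum becomes
$$S_m = \frac{(\mu;q)_m}{(\nu;q)_m}\sum_{j=0}^m \frac{(q^{-m};q)_j\,(\nu/\mu;q)_j}{(q;q)_j\,(\mu^{-1}q^{1-m};q)_j}\,q^j = \frac{(\mu;q)_m}{(\nu;q)_m}\,{_2\phi_1}\!\left(q^{-m},\,\nu/\mu;\,\mu^{-1}q^{1-m};\,q,\,q\right).$$
Then I would invoke \eqref{e.qgaussdeg} with $n=m$, $b=\nu/\mu$, $c=\mu^{-1}q^{1-m}$, which gives
$$\,{_2\phi_1}\!\left(q^{-m},\,\nu/\mu;\,\mu^{-1}q^{1-m};\,q,\,q\right) = \frac{(\nu^{-1}q^{1-m};q)_m}{(\mu^{-1}q^{1-m};q)_m}\,(\nu/\mu)^m.$$
A double application of identity (B) of \eqref{e.gasp} (with $n=m$, $a=\nu$ and $a=\mu$) makes the factors $(-1)^m$ and $q^{-m(m-1)/2}$ cancel and reduces the last ratio to $\frac{(\nu;q)_m}{(\mu;q)_m}(\mu/\nu)^m$. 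The $(\mu/\nu)^m$ then eats the $(\nu/\mu)^m$ and the $\frac{(\nu;q)_m}{(\mu;q)_m}$ cancels the initial prefactor, giving $S_m=1$.

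For $m=+\infty$ the inner sum collapses to $\sum_{j\geq 0}\frac{(\nu/\mu;q)_j}{(q;q)_j}\mu^j$, which by the $q$-binomial theorem (a standard $b\to\infty$ degeneration of \eqref{e.qgauss}) equals $\frac{(\nu;q)_\infty}{(\mu;q)_\infty}$; combined with the prefactor $\frac{(\mu;q)_\infty}{(\nu;q)_\infty}$ arising as before, this gives $1$. Alternatively, since for each fixed $j$ one has $\varphi_{q,\mu,\nu}(j|m)\to\varphi_{q,\mu,\nu}(j|+\infty)$ as $m\to\infty$ (the ratio $(\mu;q)_{m-j}(q;q)_m/((q;q)_{m-j}(\nu;q)_m)$ tends to $(\mu;q)_\infty/(\nu;q)_\infty$), and since $|\varphi_{q,\mu,\nu}(j|m)|$ is easily dominated by a summable sequence of the form $C\,|\mu|^j/|(q;q)_j|$, dominated convergence passes $S_m=1$ to the limit.

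The main obstacle is purely bookkeeping: tracking the signs and powers of $q^{\pm m(m-1)/2}$ produced by the identities (B) and (C) so that they indeed cancel. Once those cancellations are executed correctly, the proof is a mechanical application of the identities stated in Section \ref{s.hyper}.
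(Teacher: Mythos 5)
Your proof is correct and follows essentially the same route as the paper's: rewrite the sum via identity (C) of (\ref{e.gasp}) so that it is recognized as ${_2\phi_1}(q^{-m},\nu/\mu;\mu^{-1}q^{1-m};q,q)$, evaluate it with the degenerate $q$-Gauss formula (\ref{e.qgaussdeg}), and simplify the result with identity (B). Your explicit treatment of the $m=+\infty$ case (by the $q$-binomial theorem or by dominated convergence) is a small addition that the paper's proof leaves implicit.
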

\begin{proof}
By using (\ref{e.gasp}(C)) the desired identity can be written as
$$
\sum_{j=0}^m \frac{(q^{-m};q)_j\, (\nu/\mu;q)_j}{(q;q)_j (\mu^{-1} q^{1-m};q)_j} q^j = \frac{(\nu;q)_m}{(\mu;q)_m}.
$$
The left-hand side is readily identified as ${_2\phi_1}(q^{-m},\nu/\mu; \mu^{-1} q^{1-m};q,q)$ (which terminates for $j>m$ due to the $q^{-m}$ argument). Applying (\ref{e.qgaussdeg}) with $b=\mu/\nu$ and $c=\mu^{-1} q^{1-m}$, and subsequently applying (\ref{e.gasp}(B)) yields the desired right-hand side above, and hence proves the identity.
\end{proof}

The following proposition generalizes Lemma \ref{l.normalized} (which corresponds to $y=0$) and is essentially paramount to the intertwining relationship we prove later as Theorem \ref{t.intertwine}. The proposition is proved in Section \ref{s.binidentityproof}. The $\nu=0$ version of this result was proved earlier as \cite[Lemma 3.7]{BCdiscrete}. The proof we present herein follows that general approach, though that presence of $\nu\neq 0$ necessitates our use of Heine's 1847 $q$-generalization of Gauss' summation formula, given above as (\ref{e.qgauss}).

\begin{proposition}\label{p.dualitymatch}
Fix any choice of parameters $|q|<1$ and $0\leq \nu\leq \mu<1$. Then. for all $m,y\in \Z_{\geq 0}$,
$$
\sum_{j=0}^{m} \varphi_{q,\mu,\nu}(j|m) q^{jy} = \sum_{s=0}^{y} \varphi_{q,\mu,\nu}(s|y) q^{sm}.
$$
Similarly, for all $y\in \Z_{\geq 0}$,
$$
\sum_{j=0}^{+\infty} \varphi_{q,\mu,\nu}(j|+\infty) q^{jy} = \varphi_{q,\mu,\nu}(0|y).
$$
\end{proposition}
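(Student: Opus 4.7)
The plan is to prove the two identities of the proposition separately, starting with the simpler $m=+\infty$ case and then handling the finite symmetric identity. For $m=+\infty$, I would substitute the explicit form of $\varphi_{q,\mu,\nu}(j|+\infty)$, factor out the $j$-independent constant $(\mu;q)_\infty/(\nu;q)_\infty$, and reduce the problem to evaluating $\sum_{j\geq 0}\frac{(\nu/\mu;q)_j}{(q;q)_j}(\mu q^y)^j$. By the $q$-binomial theorem (the $n\to\infty$ limit of (\ref{e.qgaussdeg})), this sum equals $\frac{(\nu q^y;q)_\infty}{(\mu q^y;q)_\infty}$. Identity (A) of (\ref{e.gasp}) then rewrites the resulting product as $(\mu;q)_y/(\nu;q)_y=\varphi_{q,\mu,\nu}(0|y)$, as desired.

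For the finite identity, the plan is to follow the general strategy of \cite[Lemma 3.7]{BCdiscrete}. First, I would apply identity (C) of (\ref{e.gasp}) to the factors $(\mu;q)_{m-j}$ and $(q;q)_{m-j}$, which transfers all $j$-dependence into $q$-Pochhammer symbols with subscript $j$; after cancellation of the resulting common signs and $q$-powers this places the LHS in the hypergeometric form
\[
\text{LHS} = \frac{(\mu;q)_m}{(\nu;q)_m}\,{_2\phi_1}\!\left(q^{-m},\tfrac{\nu}{\mu};\tfrac{q^{1-m}}{\mu};q,q^{y+1}\right),
\]
with the RHS assuming the same form under the swap $m\leftrightarrow y$. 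I would then apply a Heine three-term transformation such as ${_2\phi_1}(a,b;c;q,z) = \frac{(abz/c;q)_\infty}{(z;q)_\infty}\,{_2\phi_1}(c/a,c/b;c;q,abz/c)$, which on the LHS sends $abz/c\to\nu q^y$ and on the RHS sends it to $\nu q^m$. After this rearrangement the two ${_2\phi_1}$s become susceptible to Heine's 1847 $q$-generalization of the Gauss summation formula (\ref{e.qgauss}); once that is applied, the remaining $(\cdot;q)_\infty$ prefactors can be reconciled using identities (A) and (B) of (\ref{e.gasp}) to exhibit agreement.

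The main obstacle is that the $m\leftrightarrow y$ swap is not realized by any of the six Heine-equivalent forms of a ${_2\phi_1}$, so the symmetry must emerge only after summation via (\ref{e.qgauss}). This is precisely why (\ref{e.qgauss}) is essential when $\nu\neq 0$, whereas the $\nu=0$ case handled in \cite{BCdiscrete} needs only the terminating $q$-Vandermonde identity (\ref{e.qgaussdeg}) (the extra factor $(\nu/\mu;q)_j$ in the summand when $\nu\neq 0$ prevents that simpler reduction). Choosing the correct Heine transformation and then carefully matching the infinite-product prefactors on both sides constitutes the technical bulk of the argument.
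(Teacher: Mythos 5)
Your treatment of the $m=+\infty$ identity is fine: factoring out $(\mu;q)_\infty/(\nu;q)_\infty$, summing $\sum_{j\geq 0}\frac{(\nu/\mu;q)_j}{(q;q)_j}(\mu q^y)^j=\frac{(\nu q^y;q)_\infty}{(\mu q^y;q)_\infty}$ by the $q$-binomial theorem, and simplifying with (\ref{e.gasp}(A)) does give $(\mu;q)_y/(\nu;q)_y=\varphi_{q,\mu,\nu}(0|y)$. Your rewriting of the finite sum as $\frac{(\mu;q)_m}{(\nu;q)_m}\,{_2\phi_1}(q^{-m},\nu/\mu;\mu^{-1}q^{1-m};q,q^{y+1})$ via (\ref{e.gasp}(C)) is also correct. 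The problem is the next step. After applying the Heine transformation you quote, the new ${_2\phi_1}$ has parameters $A=q/\mu$, $B=\nu^{-1}q^{1-m}$, $C=\mu^{-1}q^{1-m}$ and argument $Z=\nu q^y$, whereas Heine's $q$-Gauss formula (\ref{e.qgauss}) requires $Z=C/(AB)=\nu/q$. So (\ref{e.qgauss}) simply does not apply to the transformed series (nor are the two transformed series on the left and right term-wise comparable, since their parameters still involve $m$ and $y$ asymmetrically). This is not a repairable bookkeeping slip: for generic parameters $S_{m,y}=\sum_j\varphi_{q,\mu,\nu}(j|m)q^{jy}$ is a genuine polynomial of degree $m$ in $q^y$ and is not a ratio of $q$-Pochhammer products (already $S_{1,y}=\frac{1-\mu+(\mu-\nu)q^y}{1-\nu}$), so no chain of three-term transformations can reduce it to a single $q$-Gauss evaluation. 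The $m\leftrightarrow y$ symmetry has to be produced by an additional structural idea, which your outline is missing.

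The paper's mechanism is different and worth contrasting. It characterizes the sequence $(S_{m,y})_{y\geq 0}$ as the unique solution of a lower-triangular linear system $T^m(S_{m,0},S_{m,1},\ldots)^\top=(1,1,\ldots)^\top$, obtained by applying the normalization Lemma \ref{l.normalized} with $(\mu,\nu)$ replaced by $(q^y\mu,q^y\nu)$ and expanding $(a;q)_y$ by the finite $q$-binomial formula. Symmetry then reduces to checking that $(S_{y,m})_{y\geq 0}$ satisfies the same triangular system, and after matching coefficients of powers of $q^m$ this becomes a family of ${_2\phi_1}$ evaluations at the argument $\mu q^y$ with parameters $a=q^{-\ell}$, $b=\nu^{-1}q^{1-y}$, $c=\mu\nu^{-1}q^{1-\ell}$ --- for which $c/(ab)=\mu q^y$ exactly, so (\ref{e.qgauss}) genuinely applies. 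In other words, $q$-Gauss enters only after the uniqueness/coefficient-matching reduction manufactures series sitting at the $q$-Gauss point; your plan tries to apply it directly to (a transform of) $S_{m,y}$ itself, where it cannot. Your intuition that (\ref{e.qgauss}) is the essential new ingredient beyond the $\nu=0$ case of \cite{BCdiscrete} is right, but the route you describe to it would fail.
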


Both Lemma \ref{l.normalized} and Proposition \ref{p.dualitymatch} can be analytically continued in the parameter $q$ within a suitable domain, though we do not expand on this observation or its applications herein.

\subsection{The $(q,\mu,\nu)$-Boson process}\label{s.qboson}
This discrete time interacting particle system was introduced by Povolotsky \cite{Pov} and shown therein to be the most general zero range chipping model \cite{Evans} with factorized steady state which is also solvable via coordinate Bethe ansatz. See Sections \ref{s.stat}, \ref{s.plan} and \ref{s.limitproc} for further discussion on this; Section \ref{s.genpara} for a general parameter version of this process for which some of our results still hold; and Section \ref{s.aba} for a discussion related to our choice of using the term {\it Boson} in naming this process.

Fix $|q|<1$, $0\leq \nu\leq \mu<1$ and an integer $N\geq 1$. The $N$-site $(q,\mu,\nu)$-Boson process is a discrete time Markov chain $\vec{y}(t) = \{y_i(t)\}_{i=0}^N\in \YY^N$. The values of $y_i(t)$ record the number of particles above site $i$ at time $t$. At time $t+1$ the state $\vec{y}(t)$ is updated to another state $\vec{y}(t+1)$ according to the following procedure. For each site $i\in \{1,\ldots,N\}$,  $s_i\in \big\{0,1,\ldots, y_i(t)\big\}$ particles are transferred to site $i-1$ (at time $t+1$) with probability $\varphi_{q,\mu,\nu}(s_i|y_i(t))$ (see the left-hand side of Figure \ref{f.particlefigures}). This occurs in parallel for each site, so $y_i(t+1)$ is equal to $y_i(t)$ plus those particles which were transferred from site $i+1$ to site $i$ and minus those particles which were transferred from site $i$ to site $i-1$ (i.e. $y_i(t+1) = y_i(t) + s_{i+1}-s_i$). No particles are transferred into site $N$ or out of site $0$.

\begin{figure}
\centering\epsfig{file=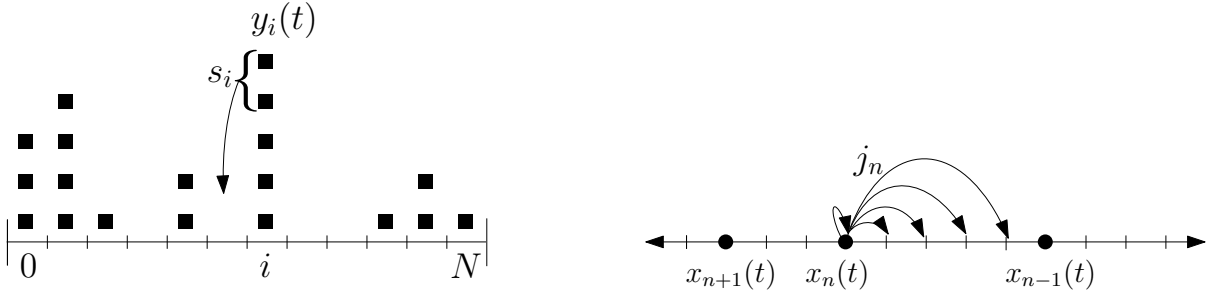, width=16cm}
\caption{Left: A single time step of the $(q,\mu,\nu)$-Boson process is illustrated. For each site $i\in\{1,\ldots, N\}$, a random number of the $y_i(t)$ particles above the site are moved them to the left to site $i-1$. The jumps are independent at each site and distributed according to $\varphi_{q,\mu,\nu}\big(j|y_i(t)\big)$. Right: A single time step of the $(q,\mu,\nu)$-TASEP is illustrated. Each particle $x_i(t)$ with $i\in\{1,\ldots, N\}$ jumps to any of the sites $x_n(t)$ through $x_{n+1}(t)-1$. The jumps are independent at each site and their length $j$ is distributed according to $\varphi_{q,\mu,\nu}\big(j|x_{n-1}(t)-x_n(t)-1\big)$}\label{f.particlefigures}
\end{figure}

For functions $f:\YY^N\to \R$ define the operators $\big[\mathcal{A}_{q,\mu,\nu}\big]_i$, $i\in \{1,\ldots,N\}$, via their action
\begin{equation*}
\big[\mathcal{A}_{q,\mu,\nu}\big]_i f(\vec{y}) = \sum_{s_i=0}^{y_i} \varphi_{q,\mu,\nu}(s_i|y_i) f\big(\vec{y}^{s_i}_{i,i-1}\big).
\end{equation*}
The operator $\big[\mathcal{A}_{q,\mu,\nu}\big]_i$ encodes the movement of particles from site $i$ to $i-1$. The single time step transition matrix $P^{{\rm Boson}}$ for the $N$-site $(q,\mu,\nu)$-Boson process (which is time homogeneous) is written via the product of these operators
$$
\big(P^{{\rm Boson}} f\big)(\vec{y}) =\big[\mathcal{A}_{q,\mu,\nu}\big]_1\cdots \big[\mathcal{A}_{q,\mu,\nu}\big]_N f(\vec{y}).
$$

It is clear that the $N$-site $(q,\mu,\nu)$-Boson process conserves the number of particles. Hence the $N$-site, $k$-particle $(q,\mu,\nu)$-Boson process has the state space $\YY^N_k$. Rather than identifying the number of particles per site, we may identify the state $\vec{y}$ via recording the ordered location $\vec{n}(\vec{y})\in \Weyl^N_k$ of each of the $k$ particles.

\subsection{The $(q,\mu,\nu)$-TASEP}\label{s.qtasep}
This discrete time interacting particle system was introduced by Povolotsky \cite{Pov} by virtue of its relationship to the $(q,\mu,\nu)$-Boson process via a simple ZRP-ASEP and particle-hole transformation (see Remark \ref{r.otherdual} herein).

Fix $|q|<1$, $0\leq \nu\leq \mu<1$ and an integer $N\geq 1$. The $N$-particle $(q,\mu,\nu)$-TASEP (totally asymmetric simple exclusion process) is a discrete time Markov chain $\vec{x}(t) =\{x_n(t)\}_{n=0}^{N}\in \XX^N$. The purpose of setting $x_0\equiv +\infty$ is to simplify notation via having a ``virtual particle'' at $+\infty$. The value of $x_n(t)$ records the location of particle $n$ at time $t$. At time $t+1$ the state $\vec{x}(t)$ is updated to another state $\vec{x}(t+1)$ according to the following procedure. For each $n\in\{1,\ldots, N\}$, $x_{n}(t)$ updates in parallel and independently to $x_n(t+1) = x_n(t) + j_n$ where $j_n\in \big\{0,\ldots, x_{n-1}(t)-x_{n}(t)-1 \big\}$ is drawn according to the probability distribution $\varphi_{q,\mu,\nu}(j_n|x_{n-1}(t)-x_{n}(t)-1)$ (see the right-hand side of Figure \ref{f.particlefigures}).

For functions $f:\XX^N\to \R$ define the operators $\big[\mathcal{B}_{q,\mu,\nu}\big]_n$,  $n\in \{1,\ldots,N\}$, via their action
\begin{equation*}
\big[\mathcal{B}_{q,\mu,\nu}\big]_n f(\vec{x}) = \sum_{j_n=0}^{x_{n-1}-x_n-1} \varphi_{q,\mu,\nu}(j_n|x_{n-1}-x_n-1) f\big(\vec{y}^{j_n}_{n}\big).
\end{equation*}
This operator encodes the one-step update of the location of particle $x_n(t)$. The single time step transition matrix $P^{{\rm TASEP}}$ for $N$-particle $(q,\mu,\nu)$-TASEP (which is time homogeneous) is written via the product of these operators
$$
\big(P^{{\rm TASEP}} f\big)(\vec{x}) =\big[\mathcal{B}_{q,\mu,\nu}\big]_1\cdots \big[\mathcal{B}_{q,\mu,\nu}\big]_N f(\vec{x}).
$$

The update rule for $(q,\mu,\nu)$-TASEP is one-sided in the sense that particle $n$ only depends on particle $n-1$ (and hence overtime all particles with lower index). Therefore, we may consider half-infinite $(q,\mu,\nu)$-TASEP in which configurations are given by particle locations $x_1(t)>x_2(t)>\cdots$. Any event concerning particle $x_N(t)$ depends only upon the evolution of the first $N$ particles and hence can be studied in reference to the $N$-particle process. We will be concerned herein with {\it step initial data} where $x_n(0)=-n$ for $n\geq 1$. See Sections \ref{s.stat} and \ref{s.plan} for mention of some additional types of initial data.

\subsection{Intertwining the $(q,\mu,\nu)$-Boson process and $(q,\mu,\nu)$-TASEP}
We come now to the primary contribution of this paper. Theorem \ref{t.intertwine} and its corollaries demonstrate an {\it intertwining} relationship between the $(q,\mu,\nu)$-Boson process and $(q,\mu,\nu)$-TASEP, generalizing the dualities discovered by \cite{BCS,BCdiscrete} between the continuous Poison and discrete geometric $q$-TASEP and $q$-Boson processes. The following theorem is a special case ($a_i\equiv 1$ and $\mu_t\equiv \mu$) of Theorem \ref{t.genintertwine}. Since we prove that general statement later, we forego a proof of this result here. We do, however, remark that this essentially boils down to Proposition \ref{p.dualitymatch}.

\begin{theorem}\label{t.intertwine}
Fix $|q|<1$, $0\leq \nu\leq \mu<1$ and an integer $N\geq 1$. Define $H:\XX^N\times \YY^N\to \R$ as
\begin{equation}\label{e.hfunct}
H(x;y) := \prod_{i=0}^{N} q^{y_i(x_i+i)},
\end{equation}
with the convention that if $y_0>0$ then the above product is 0. Then $H$ {\it intertwines} the $N$-site $(q,\mu,\nu)$-Boson process and $N$-particle $(q,\mu,\nu)$-TASEP in the sense that
$$
P^{{\rm TASEP}} H  = H \big(P^{{\rm Boson}}\big)^{\top},
$$
where $\big(P^{{\rm Boson}}\big)^{\top}$ represents the transpose of $P^{{\rm Boson}}$.
\end{theorem}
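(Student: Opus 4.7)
The plan is to unfold the matrix identity $P^{{\rm TASEP}}H = H(P^{{\rm Boson}})^{\top}$ pointwise as the probabilistic duality
\[
\EE_{\vec{x}}\big[H(\vec{x}(1),\vec{y})\big] = \EE_{\vec{y}}\big[H(\vec{x},\vec{y}(1))\big],
\]
where $\vec{x}(1)$ and $\vec{y}(1)$ denote one-step updates of the two chains starting from $\vec{x}$ and $\vec{y}$ respectively, and then to verify this equality by direct computation on each side. The main ingredients are the product structure of $H$, the site-wise independence of both parallel updates, and the two halves of Proposition \ref{p.dualitymatch} as the only nontrivial input.

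For the left-hand side, each TASEP particle updates independently as $x_n \mapsto x_n + j_n$ with $j_n \sim \varphi_{q,\mu,\nu}(\,\cdot\,|x_{n-1}-x_n-1)$, so the product form of $H$ gives $H(\vec{x}(1),\vec{y}) = H(\vec{x},\vec{y}) \prod_{n=1}^N q^{y_n j_n}$. Taking the expectation factorises across $n$, and applying Proposition \ref{p.dualitymatch} to each factor---the finite version for $n\geq 2$ with $m=x_{n-1}-x_n-1$ and $y=y_n$, and the $m=+\infty$ version for $n=1$ where $x_0 = +\infty$---reduces the LHS to
\[
H(\vec{x},\vec{y})\cdot \varphi_{q,\mu,\nu}(0|y_1) \prod_{n=2}^{N} \sum_{s_n=0}^{y_n} \varphi_{q,\mu,\nu}(s_n|y_n)\, q^{s_n(x_{n-1}-x_n-1)}.
\]

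For the right-hand side, the Boson update is $y_i(1) = y_i + s_{i+1} - s_i$ with independent $s_i \sim \varphi_{q,\mu,\nu}(\,\cdot\,|y_i)$ and boundary conventions $s_0 = s_{N+1} = 0$. The $i=0$ factor of $H(\vec{x},\vec{y}(1))$ is $q^{(y_0+s_1)\cdot x_0}$ with $x_0 = +\infty$, so the vanishing convention for $H$ forces $s_1 = 0$, contributing the prefactor $\varphi_{q,\mu,\nu}(0|y_1)$ and restricting attention to $y_0 = 0$. A summation by parts on $\sum_{i=1}^{N}(s_{i+1}-s_i)(x_i+i)$ then collapses the ratio $H(\vec{x},\vec{y}(1))/H(\vec{x},\vec{y})$ to $\prod_{j=2}^{N} q^{s_j(x_{j-1}-x_j-1)}$ on the event $s_1=0$, after which independence of the remaining $s_j$ factorises the expectation and reproduces exactly the expression displayed above for the LHS.

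The step I expect to require the most care is the boundary matching at $i=0$: the virtual TASEP particle at $x_0=+\infty$ and the reservoir site $0$ of the Boson chain are made to align only because the $m=+\infty$ half of Proposition \ref{p.dualitymatch} collapses the $j_1$ sum to the single term $\varphi_{q,\mu,\nu}(0|y_1)$, which exactly mirrors the $s_1 = 0$ constraint imposed by the convention $H\equiv 0$ when $y_0 > 0$. Beyond this subtle alignment at the left boundary, the whole argument is routine algebra built around the product structure of $H$ and the independence of the parallel updates.
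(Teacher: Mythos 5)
Your proposal is correct and follows essentially the same route as the paper's proof (given for the general-parameter version, Theorem \ref{t.genintertwine}): expand $P^{{\rm TASEP}}H$ using the product structure and independence of the parallel update, apply Proposition \ref{p.dualitymatch} termwise (the $m=+\infty$ case at $i=1$, matching the $s_1=0$ constraint forced by the $y_0>0$ convention), and identify the result with $H\big(P^{{\rm Boson}}\big)^{\top}$. The only difference is cosmetic: you verify the final identification by explicitly computing $\EE_{\vec{y}}[H(\vec{x},\vec{y}(1))]$ with a summation by parts, a step the paper compresses into ``the definition of $[\mathcal{A}_{q,\mu,\nu}]_i$.''
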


We record two corollaries of the above intertwining. In order to state the first corollary, we fix the following notation. We say $h:\Z_{\geq 0}\times \YY^N\to \R_{\geq 0}$ solves the {\it true evolution equation} with initial data $h_0(\vec{y})$ if it satisfies:
\begin{enumerate}
\item for all $\vec{y}\in \YY^N$ and $t\geq 0$, $h(t+1;\vec{y}) = P^{{\rm Boson}}h(t;\vec{y})$;
\item for all $\vec{y}\in \YY^N$, $h(0;\vec{y}) = h_0(\vec{y})$.
\end{enumerate}

The upcoming corollary follows quite readily from Theorem \ref{t.intertwine} -- see the proof of Corollary \ref{c.truegen}, a general parameter version of this result. It shows that a certain family of expectations of observables of the $(q,\mu,\nu)$-TASEP satisfy a closed, deterministic evolution equation.

\begin{corollary}\label{c.trueevo}
For any fixed $\vec{x}\in \XX^N$,
$$h(t;\vec{y}):= \EE^{\vec{x}}\big[H(\vec{x}(t),\vec{y}) \big]=\EE^{\vec{x}}\Big[\prod_{i=0}^{N} q^{y_i(x_i+i)}\Big]$$
is the unique solution to the true evolution equation with initial data $h_0(\vec{y}):= \prod_{i=0}^{N} q^{y_i(x_i+i)}$.
\end{corollary}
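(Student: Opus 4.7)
The plan is to deduce the corollary from Theorem \ref{t.intertwine} via a standard Markov-property argument, being careful to track which variable each transition operator acts on. First I would check the initial condition at $t=0$: directly from the definition of $h$ and the fact that $\vec{x}(0)=\vec{x}$ under $\PP^{\vec{x}}$, we have $h(0;\vec{y})=H(\vec{x},\vec{y})=h_0(\vec{y})$, matching condition (2) tautologically.

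For the evolution step, I would view $H$ as a (bi-indexed) matrix on $\XX^N\times \YY^N$ and unpack Theorem \ref{t.intertwine} in the form
\[
(P^{{\rm TASEP}}H)(\vec{x},\vec{y}) = \sum_{\vec{y}\,'} H(\vec{x},\vec{y}\,')\, P^{{\rm Boson}}(\vec{y},\vec{y}\,'),
\]
i.e.\ $(P^{{\rm TASEP}}H)(\vec{x},\vec{y})=[P^{{\rm Boson}}_{\vec{y}}H(\vec{x},\cdot)](\vec{y})$, where on the right $P^{{\rm Boson}}$ acts on the $\vec{y}$ variable. Then by the Markov property for $(q,\mu,\nu)$-TASEP applied to $\vec{x}(t+1)$ conditioned on $\vec{x}(t)$,
\[
h(t+1;\vec{y}) = \EE^{\vec{x}}\big[H(\vec{x}(t+1),\vec{y})\big] = \EE^{\vec{x}}\big[(P^{{\rm TASEP}}H)(\vec{x}(t),\vec{y})\big] = \EE^{\vec{x}}\big[[P^{{\rm Boson}}H(\vec{x}(t),\cdot)](\vec{y})\big].
\]
Since for fixed $\vec{y}\in\YY^N$ the operator $P^{{\rm Boson}}$ is a \emph{finite} linear combination indexed by $s_i\in\{0,\ldots,y_i\}$, I may commute expectation with this finite sum to obtain $(P^{{\rm Boson}} h(t;\cdot))(\vec{y})$, verifying condition (1).

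The only convergence issue is the legitimacy of the expectation and of this commutation. I would dispatch it by observing that under step (or any admissible) initial data the particles are monotone non-decreasing, so $x_i(t)+i\ge x_i(0)+i$ for all $t$; since $|q|<1$ and $y_i\ge 0$, this yields $|H(\vec{x}(t),\vec{y})|\le |H(\vec{x},\vec{y})|=|h_0(\vec{y})|<\infty$, giving a uniform (in $t$) bound that justifies both the expectation and Fubini/linearity swap with the finite $P^{{\rm Boson}}$-sum. Uniqueness is then immediate: the two conditions prescribe $h(0;\cdot)$ explicitly and then determine $h(t+1;\cdot)$ from $h(t;\cdot)$ by a deterministic linear recursion, so an induction on $t$ finishes.

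The main obstacle (really more a bookkeeping point than a difficulty) is keeping straight that $P^{{\rm TASEP}}$ acts on the $\vec{x}$-argument while $(P^{{\rm Boson}})^{\top}$, read as a matrix on the right, converts into $P^{{\rm Boson}}$ acting on the $\vec{y}$-argument; once this is clearly laid out, the proof reduces to applying Theorem \ref{t.intertwine} inside a single expectation.
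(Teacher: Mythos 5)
Your argument is correct and takes essentially the same route as the paper, whose proof (given for the general-parameter version, Corollary~\ref{c.truegen}) likewise deduces the evolution equation directly from the intertwining relation and obtains uniqueness from the fact that $P^{{\rm Boson}}$ restricts to a finite (triangular) matrix on each $\YY^N_k$, so the recursion determining $h(t+1;\cdot)$ from $h(t;\cdot)$ is deterministic. Your write-up merely makes explicit the Markov-property and variable-tracking bookkeeping that the paper dismisses as ``clear from Theorem~\ref{t.genintertwine}.''
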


The second corollary (which we state but do not utilize) is the Markov duality of the $(q,\mu,\nu)$-Boson process and $(q,\mu,\nu)$-TASEP. Recall that in general, two Markov chains $x(t)$ and $y(t)$ with state spaces $X$ and $Y$ are said to be {\it dual} with respect to a {\it duality functional} $H:X\times Y\to \R$ if for all $x\in X$ and $y\in Y$, and all $t\geq 0$
$$
\EE^{x}\Big[H\big(x(t),y\big)\Big] = \EE^{y}\Big[H\big(x,y(t)\big)\Big],
$$
where $\EE^{x}$ is the expectation of the Markov chain $x(t)$ stated from $x(0)=x$, and $\EE^{y}$ is the expectation of the Markov chain $y(t)$ stated from $y(0)=y$.
\begin{corollary}\label{c.dual}
The $N$-particle $(q,\mu,\nu)$-TASEP $\vec{x}(t)$ and $N$-site $(q,\mu,\nu)$-Boson process $\vec{y}(t)$ are dual with respect to the duality functional $H(x;y)$.
\end{corollary}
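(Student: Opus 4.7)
The plan is to deduce the duality directly from Theorem \ref{t.intertwine} by iterating the intertwining relation. First I would unpack $P^{{\rm TASEP}} H = H(P^{{\rm Boson}})^{\top}$ pointwise at $(\vec x, \vec y) \in \XX^N\times \YY^N$:
\begin{equation*}
\sum_{\vec x'\in \XX^N} P^{{\rm TASEP}}(\vec x, \vec x')\, H(\vec x', \vec y) \;=\; \sum_{\vec y'\in \YY^N} H(\vec x, \vec y')\, P^{{\rm Boson}}(\vec y, \vec y'),
\end{equation*}
which by the defining property of one-step expectations is exactly the $t=1$ case of duality: $\EE^{\vec x}[H(\vec x(1),\vec y)] = \EE^{\vec y}[H(\vec x, \vec y(1))]$.

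Next I would extend this to all $t\geq 1$ by induction, exploiting the crucial observation that $P^{{\rm TASEP}}$ acts only on the $\vec x$-slot of $H$ while $(P^{{\rm Boson}})^{\top}$ acts only on the $\vec y$-slot, so the two operators commute when composed. Assuming as inductive hypothesis that $(P^{{\rm TASEP}})^{t} H = H\bigl((P^{{\rm Boson}})^{\top}\bigr)^{t}$, a single further application of Theorem \ref{t.intertwine} yields
\begin{equation*}
(P^{{\rm TASEP}})^{t+1} H \;=\; P^{{\rm TASEP}}\bigl(H ((P^{{\rm Boson}})^{\top})^{t}\bigr) \;=\; \bigl(P^{{\rm TASEP}} H\bigr)\,((P^{{\rm Boson}})^{\top})^{t} \;=\; H\,((P^{{\rm Boson}})^{\top})^{t+1}.
\end{equation*}
By time homogeneity and the Markov property, $\bigl((P^{{\rm TASEP}})^{t} H\bigr)(\vec x, \vec y) = \EE^{\vec x}[H(\vec x(t), \vec y)]$ and $\bigl(H ((P^{{\rm Boson}})^{\top})^{t}\bigr)(\vec x, \vec y) = \EE^{\vec y}[H(\vec x, \vec y(t))]$, which is the duality identity for every $t\geq 0$.

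I do not anticipate any serious obstacle here: the argument is a purely formal iteration of Theorem \ref{t.intertwine}. The only point requiring a brief check is that the sums at each step converge absolutely, so that the interchange of the two commuting operators is legitimate. Since the $(q,\mu,\nu)$-Boson process conserves particle number, starting from $\vec y\in \YY^N_k$ the chain stays in the finite set $\YY^N_k$, making all $\vec y'$-sums finite. On the TASEP side, the support of $\vec x(t)$ starting from any fixed $\vec x\in \XX^N$ is countable, and the only potentially unbounded jump occurs for the leading particle $x_1$ through the infinite gap to $x_0=+\infty$; for this term the geometric tail of $\varphi_{q,\mu,\nu}(\,\cdot\,|+\infty)$ together with $|q|<1$ (applied to any $\vec y$ with $y_0=0$, the only case in which $H\not\equiv 0$) yields absolute convergence. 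No structural input beyond Theorem \ref{t.intertwine} is needed.
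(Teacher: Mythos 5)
Your proof is correct and is exactly the standard argument the paper has in mind: the paper states Corollary \ref{c.dual} without proof as an immediate consequence of Theorem \ref{t.intertwine}, and the intended route is precisely your iteration $(P^{{\rm TASEP}})^{t} H = H\bigl((P^{{\rm Boson}})^{\top}\bigr)^{t}$ together with the identification of both sides as the two expectations in the duality relation. Your convergence remark is fine (and could be shortened: since particles only move right, $H(\vec x(t),\vec y)$ is bounded on the support of $\vec x(t)$ for fixed initial $\vec x$ and fixed $\vec y$, so all sums converge absolutely).
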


\begin{remark}\label{r.otherdual}
There is a simpler duality between these processes which should be distinguished from that of Corollary \ref{c.dual}. The gaps $g_i(t) := x_{i-1}(t)-x_{i}(t)$ of the $(q,\mu,\nu)$-TASEP evolve according to the update rule for the $(q,\mu,\nu)$-Boson process in which particles now jump from site $i$ to $i+1$ (instead of $i$ to $i-1$). For instance, step initial data $(q,\mu,\nu)$-TASEP corresponds with $g_1(0)=+\infty$ and $g_i(0) \equiv 0$ for $i\neq 1$. This mapping between the two processes was discussed by Povolotsky \cite{Pov} and called a ZRP-ASEP and particle-hole transformation.
\end{remark}

\subsection{Free evolution equations with two-body boundary conditions}\label{s.betheant}
We confront the question of how to solve the true evolution equation in Corollary \ref{c.trueevo} and hence compute formulas for the expectations of $(q,\mu,\nu)$-TASEP observables. A priori it is not clear how to proceed. The first reduction is to recognize that the transition matrix $P^{{\rm Boson}}$ is a direct sum of transition matrices for the $N$-site, $k$-particle $(q,\mu,\nu)$-Boson process which has state space $\YY^N_k$. In principal this reduces the problem of computing $h(t;\vec{y})$ for any fixed $N$ and $k$ to a matter of finite matrix multiplication (or exponentiation). The challenge, however, is to figure out a way to perform this computation with complexity which remains constant as $N$, $k$ and $t$ grow.

Proposition \ref{p.freeevo} provides an important step towards this reduction in complexity. The $N$-site, $k$-particle $(q,\mu,\nu)$-Boson process $\vec{y}(t)$ can be rewritten in terms of particle locations $\vec{n}(t)=\vec{n}(\vec{y}(t))$. The idea (which dates back to Bethe's solution \cite{Bethe} of the Heisenberg XXX quantum spin chain) is to rewrite the $k$-particle true evolution equation for $\vec{n}(t)\in \Weyl_k^N$ in terms of a $k$-particle {\it free evolution equation} subject to $k-1$ {\it two-body boundary conditions}, but on a large state space $\vec{n}\in \Z^k$. In the spirit of the reflection principal, any solution to the free evolution equation, which satisfies the two-body boundary conditions and satisfies the desired initial data when restricted to $\Weyl_k^N$ will then coincide on $\Weyl_k^N$ with the unique solution to the true evolution equation.

Most every $k$-particle system do not enjoy this reducibility as higher order boundary conditions must also be imposed. The reason Povolotsky introduced the $(q,\mu,\nu)$-Boson process in \cite{Pov} was because it was the most general process within the class he was considering which enjoyed this property. The following proposition is effectively contained in Section 3 of \cite{Pov}. It is a special case of Proposition \ref{p.freeevogen}, which contains details as to the proof.

\begin{proposition}\label{p.freeevo}
Fix $|q|<1$, $0\leq \nu\leq \mu<1$, and integers $N,k\geq 1$. If $u:\R_{\geq 0} \times \Z^k \to \C$ solves:
\begin{enumerate}
\item {\em ($k$-particle free evolution equation)} for all $\vec{n}\in \Z^k$ and $t\geq 0$,
$$
u(t+1;\vec{n}) = \prod_{i=1}^{k} \big[\nabla_{\mu,\nu}\big]_i u(t;\vec{n}),
$$
where $[\nabla_{\mu,\nu}]_i u(t;\vec{n}) := \frac{\mu-\nu}{1-\nu} u(t;\vec{n}_{i}^{-}) + \frac{1-\mu}{1-\nu} u(t;\vec{n})$;
\item {\em ($k-1$ two-body boundary conditions)} for all $\vec{n}\in \Z^k$ such that for some $i\in \{1,\ldots, k-1\}$, $n_i= n_{i+1}$, and all $t\geq 0$,
$$
\alpha u(t; n_{i,i+1}^{-}) +\beta u(t;\vec{n}_{i+1}^{-}) + \gamma u(t;\vec{n}) -u(t;\vec{n}_i^{-}) = 0
$$
where the parameters $\alpha,\beta,\gamma$ are defined in terms of $q$ and $\nu$ as
$$
\alpha = \frac{\nu(1-q)}{1-q\nu},\qquad \beta= \frac{q-\nu}{1-q\nu},\qquad \gamma = \frac{1-q}{1-q\nu};
$$
\item {\em (initial data)} for all $\vec{n}\in \Weyl^N_k$, $u(0;\vec{n}) = h_0\big(\vec{y}(\vec{n})\big)$;
\end{enumerate}
then for all $\vec{n}\in \Weyl_k^N$, and all $t\geq 0$,
$
u(t;\vec{n}) = h\big(t;\vec{y}(\vec{n})\big)
$
where $h(t;\vec{y})$ is the solution to the true evolution equation with initial data $h_0$.
\end{proposition}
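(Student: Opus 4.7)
My plan is to proceed by induction on $t$. The base case $t = 0$ is immediate from condition (3): $u(0;\vec{n}) = h_0(\vec{y}(\vec{n})) = h(0;\vec{y}(\vec{n}))$ for every $\vec{n} \in \Weyl^N_k$.

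For the inductive step, assume $u(t;\vec{n}') = h(t;\vec{y}(\vec{n}'))$ for all $\vec{n}' \in \Weyl^N_k$, and fix $\vec{n} \in \Weyl^N_k$. Combining the free evolution equation (1) for $u$ with the true evolution equation for $h$, the inductive step reduces to showing
$$\prod_{i=1}^k [\nabla_{\mu,\nu}]_i u(t;\vec{n}) \;=\; \bigl(P^{\mathrm{Boson}} h(t;\cdot)\bigr)(\vec{y}(\vec{n})).$$
Expanding the left-hand side yields a sum of $2^k$ terms indexed by subsets $I \subseteq \{1,\ldots,k\}$, each a monomial in $c_0 = \tfrac{1-\mu}{1-\nu}$ and $c_1 = \tfrac{\mu-\nu}{1-\nu}$ times $u(t;\vec{n}_I^-)$. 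Some arguments $\vec{n}_I^-$ lie outside $\Weyl^N_k$, precisely when there exist adjacent indices with $n_i = n_{i+1}$, $i \in I$, and $i+1 \notin I$. The two-body boundary condition (2), with its prescribed $\alpha,\beta,\gamma$, rewrites each such out-of-chamber value $u(t;\vec{n}_i^-)$ as a linear combination of $u(t;\vec{n})$, $u(t;\vec{n}_{i+1}^-)$, and $u(t;\vec{n}_{i,i+1}^-)$, all of whose arguments respect the ordering at positions $(i,i+1)$. Iterating this rewrite until no conflicts remain (and doing so systematically block-by-block for each maximal cluster of coincident coordinates of $\vec{n}$) expresses the left-hand side as a linear combination of $u(t;\cdot)$ values evaluated strictly on $\Weyl^N_k$, to which the inductive hypothesis applies and replaces each by the corresponding $h(t;\cdot)$.

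Meanwhile, $(P^{\mathrm{Boson}} h(t;\cdot))(\vec{y}(\vec{n}))$ expands as a sum over transfer vectors $\vec{s} = (s_1,\ldots,s_N)$ with coefficients $\prod_i \varphi_{q,\mu,\nu}(s_i | y_i)$. Since both expressions factorize over clusters of coincident particles, equality decouples into independent per-site identities: for a cluster of $m$ particles sharing a common location, the coefficient that the free-evolution-plus-boundary-condition rewrite assigns to the outcome ``$s$ of the $m$ particles get decremented'' must exactly equal $\varphi_{q,\mu,\nu}(s|m)$. This block identity is the main obstacle; it is a purely algebraic statement about $q$-Pochhammer symbols, and it is precisely the property singling out the $(q,\mu,\nu)$-family among zero-range chipping processes, as discussed in \cite{Pov}. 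I would establish it by induction on $m$, at each step using the two-body relation once to separate off the ``rightmost'' particle of the block and then matching coefficients via the $q$-Gauss degeneration \eqref{e.qgaussdeg} (mirroring the computation appearing in the proof of Lemma \ref{l.normalized}). Once this block identity is in hand, the two global expressions match term-by-term and the induction closes.
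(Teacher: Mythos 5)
Your overall strategy is the same as the paper's: induct on $t$, reduce the inductive step to showing that the free evolution operator, after the boundary conditions are used to normal-order the out-of-chamber terms, reproduces $P^{\rm Boson}$ acting on $\tilde h(t;\vec y):=u(t;\vec n(\vec y))$, and observe that everything factorizes over maximal clusters of coincident coordinates so that the whole matter reduces to a single per-cluster identity. That per-cluster identity is exactly the paper's Lemma \ref{l.povred}, and you are right that it is the entire content of the proposition. The difference is in how it gets proved. The paper disposes of it by quoting Povolotsky's non-commutative binomial theorem: in the algebra with $BA=\alpha AA+\beta AB+\gamma BB$ one has $\bigl(pA+(1-p)B\bigr)^m=\sum_{j=0}^m\varphi_{q,\mu,\nu}(j|m)A^jB^{m-j}$ with $p=\tfrac{\mu-\nu}{1-\nu}$, which is precisely the statement that the normal-ordered coefficients are the $\varphi$ weights. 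You instead propose an induction on $m$, peeling off one particle and ``matching coefficients via the $q$-Gauss degeneration, mirroring Lemma \ref{l.normalized}.'' This is a plan rather than a proof: to close that induction you must first normal-order $BA^j$ (equivalently $B^iA$) in the quadratic algebra, which is its own computation, and the resulting coefficient identity relating $\varphi_{q,\mu,\nu}(\cdot|m)$ to $\varphi_{q,\mu,\nu}(\cdot|m-1)$ is not visibly the same application of \eqref{e.qgaussdeg} as in Lemma \ref{l.normalized}. Since you correctly identify this block identity as the main obstacle but leave it unexecuted, that is the one genuine gap; everything surrounding it (the cluster decomposition, the use of the boundary condition to eliminate out-of-chamber arguments, the appeal to uniqueness of the true evolution) matches the paper's argument.
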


\subsection{Nested contour integral formula for step initial data}
Theorem \ref{t.moments} provides an exact, and concise nested contour integral formula for the expectations of the general class of observables considered in Corollary \ref{c.trueevo}, for the $(q,\mu,\nu)$-TASEP started from step initial data. This achieves the aim of finding a solution to the true evolution equation which does not grow in complexity. At first glance, this formula might seem to be pulled out of thin air. Formulas of this type, however, have occurred previously in the coordinate Bethe ansatz literature and the Macdonald process literature -- see Sections \ref{s.plan} and \ref{s.mac} for further discussion as well as \cite{Yudson, BorCor,BCS,BCdiscrete, BCPS}. In any case, once given such a proposed formula, assisted by Proposition \ref{p.freeevo} it is quite simple to prove the theorem.

\begin{theorem}\label{t.moments}
Fix $|q|<1$, $0\leq \nu\leq \mu<1$, and integers $N,k\geq 1$. Consider $(q,\mu,\nu)$-TASEP started from step initial data. Then for any $\vec{n}\in \Weyl_k^N$,
\begin{equation}\label{lhsthm}
\EE\left[\prod_{i=1}^{k} q^{x_{n_i}+n_i}\right] = \frac{(-1)^k q^{\frac{k(k-1)}{2}}}{(2\pi \i)^k} \oint_{\gamma_1} \cdots \oint_{\gamma_k} \prod_{1\leq A<B\leq k} \frac{z_A-z_B}{z_A-q z_B} \, \prod_{j=1}^{k} \left(\frac{1-\nu z_j}{1-z_j}\right)^{n_j} \left(\frac{1-\mu z_j}{1-\nu z_j}\right)^t \frac{dz_j}{z_j(1-\nu z_j)}
\end{equation}
where the integration contours $\gamma_1,\ldots, \gamma_k$ are chosen so they all contain $1$, $\gamma_A$ contains $q\gamma_B$ for $B>A$ and all contours exclude $0$ and $1/\nu$ (see Figure \ref{f.contours} for an illustration when $k=5$).
\end{theorem}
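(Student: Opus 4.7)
The plan is to combine Corollary \ref{c.trueevo} with Proposition \ref{p.freeevo} to reduce the theorem to a direct verification that the right-hand side of (\ref{lhsthm}), viewed as a function $u(t;\vec{n})$ on $\Z_{\geq 0}\times \Z^k$, satisfies the three hypotheses of Proposition \ref{p.freeevo}. By Corollary \ref{c.trueevo} applied to step initial data $x_i(0)=-i$, the left-hand side of (\ref{lhsthm}) equals $h\bigl(t;\vec{y}(\vec{n})\bigr)$, where $h$ solves the true evolution equation starting from $h_0(\vec{y})=\bfone_{y_0=0}$. Proposition \ref{p.freeevo} then reduces the problem to exhibiting such a $u$, and I would take the right-hand side of (\ref{lhsthm}) itself (extended by the same formula to all $\vec{n}\in\Z^k$) as the candidate.

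The free evolution equation is immediate: the $z_j$-dependent factor $\bigl(\tfrac{1-\nu z_j}{1-z_j}\bigr)^{n_j}$ is an eigenfunction of $[\nabla_{\mu,\nu}]_j$ with eigenvalue $\tfrac{1-\mu z_j}{1-\nu z_j}$, via the one-line identity $(\mu-\nu)(1-z_j)+(1-\mu)(1-\nu z_j)=(1-\nu)(1-\mu z_j)$. Each time step therefore multiplies the integrand by exactly $\prod_j\tfrac{1-\mu z_j}{1-\nu z_j}$, matching the $t$-dependent factor.

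The boundary condition is the main algebraic step. Writing $f(z):=(1-z)/(1-\nu z)$ and substituting the explicit values $\alpha=\nu(1-q)/(1-q\nu)$, $\beta=(q-\nu)/(1-q\nu)$, $\gamma=(1-q)/(1-q\nu)$, a short expansion should yield the key identity
\begin{equation*}
\alpha f(z_i)f(z_{i+1})+\beta f(z_{i+1})+\gamma-f(z_i)=\frac{(1-\nu)^2\,(z_i-q z_{i+1})}{(1-q\nu)\,(1-\nu z_i)(1-\nu z_{i+1})}.
\end{equation*}
The factor $(z_i-qz_{i+1})$ cancels the denominator of the Bethe factor $\tfrac{z_i-z_{i+1}}{z_i-qz_{i+1}}$, eliminating the only obstruction to $z_i\leftrightarrow z_{i+1}$ symmetry of the remaining integrand. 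Deforming $\gamma_i$ and $\gamma_{i+1}$ to a common contour (permissible since the $(i,i+1)$-pole is now absent), the resulting double integral is antisymmetric under $z_i\leftrightarrow z_{i+1}$ through the surviving $(z_i-z_{i+1})$ factor, and therefore vanishes.

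The initial data check is a nested residue computation at $t=0$: shrinking the contours inward one at a time, the only surviving residues for $\vec{n}\in\Weyl_k^N$ with all $n_j\geq 1$ come from $z_j=1$, and they collapse to $1$ by the telescoping argument of the $\nu=0$ case in \cite[Section 3]{BCdiscrete} (the additional pole at $z_j=1/\nu$ is excluded by the contour choice). When $n_k=0$, the $z_k$-integrand is holomorphic inside $\gamma_k$ (no pole from $(1-z_k)^{n_k}$, and the Bethe poles lie outside $\gamma_k$), so the integral vanishes, matching $\bfone_{y_0=0}$. I expect this nested residue bookkeeping to be the main technical hurdle; the free-evolution and boundary-condition verifications reduce to the short algebraic identities above.
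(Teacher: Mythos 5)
Your proposal is correct and follows essentially the same route as the paper: view the right-hand side of (\ref{lhsthm}) as a candidate $u(t;\vec{n})$, verify the three hypotheses of Proposition \ref{p.freeevo} with $h_0(\vec{y})=\mathbf{1}_{y_0=0}$, and conclude via Corollary \ref{c.trueevo}; your explicit boundary-condition identity is exactly the factor the paper extracts. The only (minor) divergence is in the initial-data check, where the paper deforms the contours outward to infinity and picks up only the simple poles at $z_j=0$ (there is no pole at $1/\nu$ since $n_j-1\geq 0$ and no residue at infinity by quadratic decay), which is cleaner than your inward shrink onto the order-$n_j$ poles at $z_j=1$.
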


\begin{figure}
\centering\epsfig{file=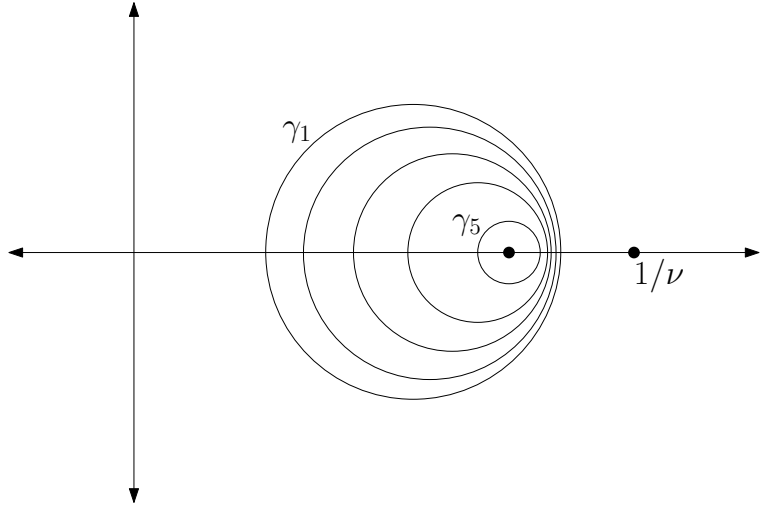, width=10cm}
\caption{Possible contours of integration for Theorem \ref{t.moments} with $k=5$}\label{f.contours}
\end{figure}

\begin{proof}
Call the right-hand side of (\ref{lhsthm}) $u(t;\vec{n})$. Let us check that $u(t;\vec{n})$ satisfies the three conditions of Proposition \ref{p.freeevo} with initial data $h_0(\vec{y}) = \mathbf{1}_{\vec{y}:y_0=0}$ (or in other words $u(0;\vec{n}) = \prod_{i=1}^{k} \mathbf{1}_{n_i>0}$).

To check the $k$-particle free evolution equation observe that by bring $\big[\nabla_{\mu,\nu}\big]_i$ inside the integration, it suffices to confirm that
$$
\big[\nabla_{\mu,\nu}\big]_i \left(\frac{1-\nu z_i}{1-z_i}\right)^{n_i} \left(\frac{1-\mu z_i}{1-\nu z_i}\right)^t = \left(\frac{1-\nu z_i}{1-z_i}\right)^{n_i} \left(\frac{1-\mu z_i}{1-\nu z_i}\right)^{t+1},
$$
as is readily done.

To check the $k-1$ two-body boundary conditions we may apply the boundary condition to the integrand. This application brings out an additional factor in the integrand of
$$
\frac{(1-\nu)^2}{(1-q\nu) (1-\nu z_{i})(1-\nu z_{i+1})} \, (z_{i}-q z_{i+1}).
$$
The factor of $(z_{i}-q z_{i+1})$ cancels the pole separating the contours $\gamma_i$ and $\gamma_{i+1}$. We are now free to deform the contours $\gamma_i$ and $\gamma_{i+1}$ to lie along the same curve. Since $n_i=n_{i+1}$ (by hypothesis) the integrand is anti-symmetric in $z_i$ and $z_{i+1}$. This, however, implies that the integral is zero, and hence the second condition is confirmed.

To check the initial data observe that when $n_k= 0$, there is no pole at $1$ for the $z_k$ integral. Thus, since $\gamma_k$ contains no poles, the integral (and likewise $u(0;\vec{n})$) is zero. The alternative $n_k>0$ implies all $n_i>0$ for $\vec{n}\in \Weyl_k^N$. In this case, we expand $\gamma_1$ through $\gamma_k$ to infinity. There are no poles at $1/\nu$ since $n_i-1\geq 0$, and there is no pole at infinity due to quadratic decay. There are poles at $z_i=0$ which (through evaluating the residues) shows the integral is equal to one. Thus we have shown  $u(0;\vec{n}) = \prod_{i=1}^{k} \mathbf{1}_{n_i>0}$ as desired.

By virtue of Proposition \ref{p.freeevo}, this means that $u(t;\vec{n}) = h\big(t;\vec{y}(\vec{n})\big)$ where $h(t;\vec{y})$ is the solution to the true evolution equation with initial data $h_0(\vec{y}) = \mathbf{1}_{\vec{y}:y_0=0}$. Corollary \ref{c.trueevo} then implies that $u(t;\vec{n}) =  \EE^{\vec{x}}\big[H(\vec{x}(t),\vec{y}(\vec{n})) \big]$ which is immediately matched to the left-hand side of (\ref{lhsthm}).
\end{proof}

\subsection{Fredholm determinant formula}

For $(q,\mu,\nu)$-TASEP with step initial data, the formulas from Theorem \ref{t.moments} provide a complete characterization of the distribution of $\vec{x}(t)$. Indeed, each random variable $q^{x_{n}(t) +n}$, $1\leq n\leq N$, is in $(0,1)$ and hence knowledge of all joint movement suffices to characterize the joint distribution. Despite this fact, it is not obvious how to extract meaningful asymptotic distribution information from these formulas. In the case of one-point distributions (i.e. the distribution of $x_n(t)$ for a single $n$) this was achieved in \cite{BorCor}. We will apply the approach developed in \cite{BorCor} (in particular, the general restatement of the calculation in \cite{BorCor} which can be found in \cite[Section 3]{BCS}).

Theorem \ref{t.fred} provides two Fredholm determinant formulas for the $e_q$-Laplace transform of the observable $q^{x_{n}(t)+n}$, and consequently for the one-point distribution of $x_n(t)$ (see Remark \ref{r.invert}).  This type of Fredholm determinant formula (in particular that of (\ref{MellinBarnes}) is quite amenable to asymptotic analysis -- see Section \ref{s.limitproc} for further discussion.

\begin{theorem}\label{t.fred}
Fix $q\in (0,1)$ and $0\leq \nu\leq \mu<1$. Consider $(q,\mu,\nu)$-TASEP $\vec{x}(t)$ started from step initial data. Then for all $\zeta\in\C\setminus \R_+$,
\begin{equation}\label{MellinBarnes}
\EE \left[\frac{1}{\big(\zeta q^{x_{n}(t)+n};q\big)_{\infty}}\right] = \det\big(I + K_{\zeta}\big)
\end{equation}
where $\det\big(I + K_{\zeta}\big)$ is the Fredholm determinant of $K_\zeta: L^2(C_1)\to L^2(C_1)$ for $C_1$ a positively oriented circle containing 1 with small enough radius so as to not contain 0, $1/q$ and $1/\nu$. The operator $K_\zeta$ is defined in terms of its integral kernel
\begin{equation*}
K_{\zeta}(w,w') = \frac{1}{2\pi \i} \int_{-\i \infty + 1/2}^{\i\infty +1/2} \frac{\pi}{\sin(-\pi s)} (-\zeta)^s \frac{g(w)}{g(q^s w)} \frac{1}{q^s w - w'} ds
\end{equation*}
with
\begin{equation*}
g(w) = \left(\frac{(\nu w;q)_{\infty}}{(w;q)_{\infty}}\right)^{n} \left( \frac{(\mu w;q)_{\infty}}{(\nu w;q)_{\infty}}\right)^t \frac{1}{(\nu w;q)_{\infty}}.
\end{equation*}

The following second formula also holds:
\begin{equation}\label{Cauchy}
\EE \left[\frac{1}{\big(\zeta q^{x_{n}(t)+n};q\big)_{\infty}}\right] = \frac{\det\big(I + \zeta \tilde{K}\big)}{(\zeta;q)_{\infty}}
\end{equation}
where $\det\big(I + \zeta \tilde{K}\big)$ is the Fredholm determinant of $\zeta$ times the operator $\tilde{K}_\zeta: L^2(C_{0,1})\to L^2(C_{0,1})$ for $C_{0,1}$ a positively oriented circle containing 0 and 1 (but not $1/\nu$). The operator $\tilde{K}$ is defined in terms of its integral kernel
\begin{equation*}
\tilde{K}(w,w') = \frac{g(w)/g(qw)}{qw'-w}
\end{equation*}
where the function $g$ is as above.
\end{theorem}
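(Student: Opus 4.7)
The plan is to apply the general framework of \cite{BorCor}, reformulated in \cite[Section 3]{BCS}, which converts nested contour integral moment formulas of the type in Theorem \ref{t.moments} into Fredholm determinant expressions for the $e_q$-Laplace transform $\EE[1/(\zeta q^{X};q)_\infty]$. The first step is to use the $q$-exponential identity $\frac{1}{(\zeta q^{X};q)_\infty} = \sum_{k\geq 0}\zeta^k q^{kX}/(q;q)_k$ to expand
$$\EE\left[\frac{1}{(\zeta q^{x_n(t)+n};q)_\infty}\right] = \sum_{k=0}^\infty \frac{\zeta^k}{(q;q)_k}\, \mu_k, \qquad \mu_k := \EE\big[q^{k(x_n(t)+n)}\big],$$
where exchange of sum and expectation is justified because $q^{k(x_n(t)+n)} \in (0,1]$ for step initial data; this is initially valid for $|\zeta|$ small and extends to $\zeta \in \C \setminus \R_+$ by analyticity. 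Specializing Theorem \ref{t.moments} to $n_1 = \cdots = n_k = n$ supplies the nested integral for $\mu_k$. A direct calculation gives $\frac{g(w)}{g(qw)} = \left(\frac{1-\nu w}{1-w}\right)^n\left(\frac{1-\mu w}{1-\nu w}\right)^t\frac{1}{1-\nu w}$, so the integrand of (\ref{lhsthm}) factorizes as $(-1)^k q^{k(k-1)/2}\prod_{A<B}\frac{z_A-z_B}{z_A-qz_B}\prod_j\frac{g(z_j)}{g(qz_j)}\frac{dz_j}{z_j}$.

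For (\ref{MellinBarnes}), I would deform the nested contours $\gamma_1,\ldots,\gamma_k$ to a common small circle $C_1$ around $1$, accounting carefully for the residues at the hyperplane poles $z_A = qz_B$. The telescoping identity $\prod_{j=0}^{k-1}g(q^j w)/g(q^{j+1}w) = g(w)/g(q^k w)$ allows these residue contributions to be re-summed and the $k$-fold integral recast into a form involving $g(w)/g(q^k w)$. Next I would invoke the Mellin--Barnes representation (as formalized in \cite[Section 3.2]{BCS}) which promotes the integer exponent $k$ to the continuous variable $s$ inside the kernel $K_\zeta$ via the factor $\pi(-\zeta)^s/\sin(-\pi s)$, and thereby identifies the series $\sum_k \zeta^k\mu_k/(q;q)_k$ with the Fredholm expansion $\sum_{k\geq 0}\frac{1}{k!}\int_{C_1^k}\det[K_\zeta(w_i,w_j)]\prod dw_i = \det(I + K_\zeta)$.

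For (\ref{Cauchy}), I would instead symmetrize using the Cauchy-type identity that rewrites $\prod_{A<B}\frac{z_A-z_B}{z_A-qz_B}$ as the Cauchy determinant $\det[1/(qz_i-z_j)]$ (up to combinatorial factors) after averaging over $S_k$, which allows all contours to be moved simultaneously to the common circle $C_{0,1}$ enclosing $\{0,1\}$. This expresses $\mu_k$ as a $k$-fold integral of $\det[\tilde K(w_i,w_j)]$; substituting into the series expansion and using $\sum_k\zeta^k/(q;q)_k = 1/(\zeta;q)_\infty$ to extract the overall normalization produces $\det(I+\zeta\tilde K)/(\zeta;q)_\infty$.

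The main obstacle in each case is the contour deformation: moving from the nested configuration of Theorem \ref{t.moments} to a common contour crosses the poles $z_A = qz_B$, and one must verify that the resulting residue structure precisely reorganizes into determinantal form. This is the technical core of \cite[Section 3]{BCS}, and because the $\nu$-dependence in our setting enters only through the single-variable factor $g(w)$ while the cross-ratio combinatorics $\prod_{A<B}(z_A-z_B)/(z_A-qz_B)$ is identical to the $\nu=0$ case, the deformation calculus applies verbatim with $g$ replaced by its $(q,\mu,\nu)$-deformation. Convergence of both Fredholm series then follows from standard bounds on $g(w)/g(q^s w)$ along the chosen contours, using that the poles of $g$ at $1/\nu$ and $1/\mu$ lie outside $C_1$ (respectively $C_{0,1}$) by construction.
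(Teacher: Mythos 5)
Your proposal is correct and follows essentially the same route as the paper: specialize Theorem \ref{t.moments} to $n_i\equiv n$, recognize $f(w)=g(w)/g(qw)$ so that the moment formula matches the input of the general machinery of \cite[Section 3]{BCS}, invoke the Mellin--Barnes and Cauchy type propositions there, justify the $e_q$-Laplace transform expansion via the $q$-Binomial theorem using $q^{x_n(t)+n}\in(0,1)$, and finish by analytic continuation in $\zeta$. The extra detail you give about the residue bookkeeping at $z_A=qz_B$ and the observation that the $\nu$-dependence enters only through the single-variable factor $g$ are exactly the points the paper leaves implicit when citing \cite{BCS}.
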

As this type of deduction of Fredholm determinant formulas from $q$-moment formulas has appeared before in \cite{BorCor,BCS,BCdiscrete}, we provide the steps of the proof, without going into too much detail as to how they are justified. We also do not recall the definition of Fredholm determiants, but rather refer the reader to \cite[Section 3.2.2]{BorCor}.

\begin{proof}
The first Fredholm determinant formula is referred to by \cite{BCS} as {\it Mellin-Barnes} type, and the second as {\it Cauchy} type. In order to prove the Mellin-Barnes type formula we utilize the formula for $\EE\big[q^{k(x_n(t)+n)}\big]$ from specializing all $n_i\equiv n$ in Theorem \ref{t.moments}. For the purpose of this proof define $\mu_k := \EE\big[q^{k(x_n(t)+n)}\big]$. The reason we permit this abuse of notation (this $\mu_k$ of course has a different meaning than the parameter $\mu$, or $\mu_t$ from Section \ref{s.discuss}) is that we can now identify $\mu_k$ with the formula present in \cite[Definition 3.1]{BCS}, subject to defining $f(w):= g(w)/g(qw)$, with $g$ from the statement of Theorem \ref{t.fred}. We may then apply \cite[Propositions 3.3 and 3.6]{BCS} with the contour $C_{A}=C_1$ chosen to be a very small circle around 1, and $D_{R,d}, D_{R,d;k}$ specified by setting $R=1/2$ (and $d$ arbitrary, as it does not matter for this choice of $\R$). The outcome of this is that
$$
\sum_{k\geq 0} \mu_k \frac{\zeta^k}{k_q!} =  \det\big(I + K_{\zeta}\big).
$$
In the course of this application, it is necessary to check that a few technical conditions on the contours, as well as $\zeta$ and $g$ are satisfied. These are easily confirmed for $\zeta$ with $|\zeta|$ small enough, and $C_{1}$ a small enough circle around 1. The only condition depending on the function $g$ is that $|g(w)/g(q^sw)|$ remain uniformly bounded as $w\in C_{1}$, $k\in\Z_{>0}$ and $s\in D_{R,d;k}$ varies. This is readily confirmed for $g$ from the statement of Theorem \ref{t.fred}.

Now, observe that for $\zeta$ with $|\zeta|$ small enough, we also have that
$$
\sum_{k\geq 0} \mu_k \frac{\zeta^k}{k_q!}= \EE \left[\frac{1}{\big(\zeta q^{x_{n}(t)+n};q\big)_{\infty}}\right].
$$
This is justified (as in \cite[Theorem 3.2.11]{BorCor}) by the deterministic fact that $q^{x_{n}(t)+n}\in (0,1)$ and an application of the $q$-Binomial theorem. This establishes (\ref{MellinBarnes}) for $|\zeta|$ sufficiently small. However, both sides are analytic over $C\setminus \R_{+}$ and thus the claimed Mellin-Barnes type result of (\ref{MellinBarnes}) follows via analytic continuation.

The Cauchy type formula (\ref{Cauchy}) follows from \cite[Proposition 3.10]{BCS} along with a small amount of algebra. The proof essentially follows that of \cite[Theorem 3.2.16]{BorCor}.
\end{proof}

\begin{remark}\label{r.invert}
Just like the usual Laplace transform of a positive random variable, the $e_q$-Laplace transform in Theorem \ref{t.fred} can be readily inverted (see \cite[Proposition 3.1.1]{BorCor} or \cite[Proposition 7.1]{BCS}) to give the distribution of $x_n(t)$.
\end{remark}
\begin{remark}
Setting $g_i(t) := x_{i-1}(t)-x_{i}(t)$, Remark \ref{r.otherdual} shows that $\vec{g}(t)$ evolves as the $(q,\mu,\nu)$-Boson process with particles moving from $i$ to $i+1$. The $(q,\mu,\nu)$-TASEP step initial data corresponds with having $g_1(0)=+\infty$ and $g_i(0)=0$ for $i>1$. Let $C_s(t) = \sum_{i=s+1}^{\infty} g_i(t)$ be the number of particles of $\vec{g}(t)$ strictly to the right of site $s$ at time $t$. Then clearly $\{x_n(t)+n\geq s\} = \{C_{s}(t)\geq n\}$ and hence Theorem \ref{t.fred}, in light of the inversion formula mentioned in Remark \ref{r.invert}, provides an exact formula for the distribution of $C_s(t)$ as well.
\end{remark}

\subsection{Acknowledgements}
The author extends thanks for A. Povolotsky for an early draft of the paper \cite{Pov} as well as useful discussions, and also appreciates discussions with A. Borodin and B. Vet\H{o} related to this work. The author was partially supported by the NSF through DMS-1208998 as well as by Microsoft Research and MIT through the Schramm Memorial Fellowship, and by the Clay Mathematics Institute through the Clay Research Fellowship.

\section{General parameter intertwining}\label{s.genparsection}

We introduce a more general version of the $(q,\mu,\nu)$-Boson process and $(q,\mu,\nu)$-TASEP which includes site/particle dependent jump parameters $a_i$, $i\in \{1,\ldots, N\}$ and time dependent jump parameters $\mu_t$, $t\in \Z_{>0}$. The processes considered in Section \ref{s.intro} correspond to setting all $a_i\equiv 1$ and all $\mu_t\equiv \mu$. We state and prove a general parameter analog of the intertwining relationship given earlier as Theorem \ref{t.intertwine} (below Theorem \ref{t.genintertwine}) and an analog of Proposition \ref{p.freeevo} (below Proposition \ref{p.freeevogen}) providing the reduction of the associate true evolution equation to a free evolution equation with two-body boundary conditions. For general $\mu_t$ but $a_i\equiv 1$ we provide the modifications to Theorems \ref{t.moments} and \ref{t.fred}. Presently it is not clear whether analogous results hold in the full generality of varying $a_i$ parameters. For the case $\nu=0$ analogs of Theorem \ref{t.moments} and \ref{t.fred} do hold \cite{BCdiscrete} for general $a_i$ parameters -- see Section \ref{s.mac} for a brief discussion of the relation between the inclusion of these parameters and Macdonald processes \cite{BorCor}.

\subsection{Site/particle dependent and time dependent jump parameters}\label{s.genpara}
Fix $|q|<1$, $\nu\in [0,1)$, and an integer $N\geq 1$. Further, fix site/particle dependent jump parameters $a_i>0$ for all $i\in \{1,\ldots, N\}$, and time dependent jump parameters $\mu_t\in[\nu,1)$ for all $t\in \Z_{>0}$. We define a general parameter version of the $(q,\mu,\nu)$-Boson process and $(q,\mu,\nu)$-TASEP from Sections \ref{s.qboson} and \ref{s.qtasep} with respect to these site/particle dependent and time dependent jump parameters as follows. For the general parameter version of the $(q,\mu,\nu)$-Boson process, $s_i\in \{0,1,\ldots, y_i(t)\}$ particles are transferred from site $i$ to site $i-1$ at time $t+1$ with probability $\varphi_{q,a_i \mu_{t+1},\nu}(s_i|y_i(t))$. For the general parameter version of the $(q,\mu,\nu)$-TASEP, the particle $x_{n}(t)$ updates its location to $x_n(t+1) = x_n(t) + j_n$ where $j_n\in \big\{0,\ldots, x_{n-1}(t)-x_{n}(t)-1\big\}$ is drawn according to the probability distribution $\varphi_{q,a_n\mu_{t+1},\nu}(j_n|x_{n-1}(t)-x_{n}(t)-1)$.

These general parameter Markov chains are no-longer time homogeneous nor spatially homogeneous. For $t\geq 1$ let $P_t^{{\rm Boson}}$ and $P_t^{{\rm TASEP}}$ denote the respective transition matrices from time $t-1$ to time $t$ for the general parameter $(q,\mu,\nu)$-Boson process and $(q,\mu,\nu)$-TASEP. Generalizing what is written in Sections \ref{s.qboson} and \ref{s.qtasep}, respectively, we may write explicitly
\begin{eqnarray}\label{e.pts}
\big(P_t^{{\rm Boson}} f\big)(\vec{y}) &=&\big[\mathcal{A}_{q,a_1\mu_t,\nu}\big]_1\cdots \big[\mathcal{A}_{q,a_N\mu_t,\nu}\big]_N f(\vec{y}),\\
\nonumber\big(P^{{\rm TASEP}} f\big)(\vec{x}) &=&\big[\mathcal{B}_{q,a_1\mu_t,\nu}\big]_1\cdots \big[\mathcal{B}_{q,a_N\mu_t,\nu}\big]_N f(\vec{x}),\\
\end{eqnarray}
where in the first line $f:\YY^N\to \R$ and in the second line $f:\XX^N\to \R$.

\subsection{Intertwining}
The following theorem is a general parameter analog of Theorem \ref{t.intertwine}. The proof essentially amounts to Proposition \ref{p.dualitymatch}.

\begin{theorem}\label{t.genintertwine}
Fix $|q|<1$, $\nu\in [0,1)$, and an integer $N\geq 1$. Further, fix site/particle dependent jump parameters $a_i>0$ for all $i\in \{1,\ldots, N\}$, and time dependent jump parameters $\mu_t\in[\nu,1)$ for all $t\in \Z_{>0}$. Recall $H:\XX^N\times \YY^N\to \R$ from (\ref{e.hfunct}). $H$ {\it intertwines} the general parameter version of the $N$-particle $(q,\mu,\nu)$-TASEP $\vec{x}(t)$ and the the $N$-site $(q,\mu,\nu)$-Boson process $\vec{y}(t)$ in the sense that for all $t\geq 1$
$$
P_t^{{\rm TASEP}} H  = H \big(P_t^{{\rm Boson}}\big)^{\top}.
$$
\end{theorem}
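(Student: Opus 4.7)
The plan is to read the intertwining $P_t^{\rm TASEP} H = H (P_t^{\rm Boson})^\top$ entry by entry as the one-step Markov duality
$$
\EE^{\vec{x}}\bigl[H(\vec{x}(1);\vec{y})\bigr] = \EE^{\vec{y}}\bigl[H(\vec{x};\vec{y}(1))\bigr],
$$
where the left expectation is over one step of the general parameter $(q,\mu,\nu)$-TASEP from $\vec{x}$ and the right is over one step of the general parameter $(q,\mu,\nu)$-Boson process from $\vec{y}$, and then to verify this identity by direct computation, using Proposition \ref{p.dualitymatch} site by site.

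First I would dispose of the case $y_0>0$: the factor $q^{y_0\cdot \infty}$ in $H$ forces the left side to vanish identically, and on the right one step of the Boson process can only increase $y_0$ (particles at site $1$ are transferred to site $0$), so $y_0(1)\ge y_0>0$ and $H(\vec{x};\vec{y}(1))=0$ deterministically. So assume $y_0=0$ and $H(\vec{x};\vec{y})=\prod_{i=1}^N q^{y_i(x_i+i)}$.

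On the TASEP side, the parallel independent updates $x_n(1)=x_n+j_n$ with $j_n\sim \varphi_{q,a_n\mu_t,\nu}(\cdot\,|\,x_{n-1}-x_n-1)$ give the ratio $H(\vec{x}(1);\vec{y})/H(\vec{x};\vec{y})=\prod_n q^{y_n j_n}$, hence
$$
\EE^{\vec{x}}\bigl[H(\vec{x}(1);\vec{y})\bigr] = H(\vec{x};\vec{y})\prod_{n=1}^N\sum_{j_n=0}^{x_{n-1}-x_n-1}\varphi_{q,a_n\mu_t,\nu}(j_n\,|\,x_{n-1}-x_n-1)\,q^{y_n j_n}.
$$
For $n=1$ the gap $x_0-x_1-1=+\infty$, so the $m=+\infty$ case of Proposition \ref{p.dualitymatch} collapses that factor to $\varphi_{q,a_1\mu_t,\nu}(0|y_1)$; for $n\ge2$, the finite case with $m=x_{n-1}-x_n-1$ and $y=y_n$ rewrites each remaining factor as $\sum_{s_n=0}^{y_n}\varphi_{q,a_n\mu_t,\nu}(s_n|y_n)\,q^{s_n(x_{n-1}-x_n-1)}$.

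On the Boson side, $y_i(1)=y_i-s_i+s_{i+1}$ with $s_{N+1}\equiv 0$ and independent $s_i\sim \varphi_{q,a_i\mu_t,\nu}(\cdot\,|\,y_i)$. The vanishing convention of $H$ forces $s_1=0$ (else $y_0(1)=s_1>0$), contributing the prefactor $\varphi_{q,a_1\mu_t,\nu}(0|y_1)$; on this event a short telescoping of the exponents $\sum_i(y_i(1)-y_i)(x_i+i)$ collapses to $\sum_{i=2}^N s_i(x_{i-1}-x_i-1)$, so that
$$
\frac{H(\vec{x};\vec{y}(1))}{H(\vec{x};\vec{y})} = \prod_{i=2}^N q^{s_i(x_{i-1}-x_i-1)},
$$
and summing against the independent distributions of $s_2,\ldots,s_N$ reproduces the TASEP expression term by term. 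The whole argument is driven by Proposition \ref{p.dualitymatch}; the only point of care is the asymmetric role of site $1$, where the $m=+\infty$ identity of that proposition is precisely what matches the infinite TASEP gap against the hard constraint $s_1=0$ forced by the $y_0$ convention of $H$.
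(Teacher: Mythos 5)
Your proposal is correct and takes essentially the same route as the paper: expand the TASEP side as a product of single-site sums against $q^{y_n j_n}$, apply Proposition \ref{p.dualitymatch} factor by factor (the $m=+\infty$ case handling the infinite gap at particle $1$), and identify the result with the Boson side. The only difference is presentational -- you verify the Boson-side expression explicitly via the $s_1=0$ constraint and the telescoping of exponents, whereas the paper leaves that identification to the reader by citing the definition of $[\mathcal{A}_{q,\mu,\nu}]_i$.
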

\begin{proof}
Recalling the definition of $\big[\mathcal{B}_{q,\mu,\nu}\big]_i$ from Section \ref{s.qtasep} and employing (\ref{e.pts}) we readily find that
$$
P_t^{{\rm TASEP}} H(\vec{x},\vec{y}) = \prod_{i=1}^{N} \Bigg(\sum_{j_i=0}^{x_{i-1}-x_i-1} \varphi_{q,a_i\mu_t,\nu}(j_i|x_{i-1}-x_i-1) \, q^{j_i y_i}\Bigg) \, \prod_{i=0}^{N} q^{y_i(x_i+i)}.
$$
Applying Proposition \ref{p.dualitymatch} to each term of the product $i=1$ through $N$ above we find that
\begin{eqnarray*}
P_t^{{\rm TASEP}} H(\vec{x},\vec{y}) &=& \varphi_{q,a_1\mu_t,\nu}(0|y_1) \prod_{i=2}^{N} \Bigg( \sum_{s_i}^{y_i} \varphi_{q,a_i\mu_t,\nu}(s_i|y_i) q^{s_i (x_{i-1}-x_i-1)}\Bigg) \prod_{i=0}^{N} q^{y_i(x_i+i)}\\
&=& H \big(P_t^{{\rm Boson}}\big)^{\top},
\end{eqnarray*}
where we have employed (\ref{e.pts}) and the definition of $[\mathcal{A}_{q,\mu,\nu}\big]_i$ from Section \ref{s.qboson}.
\end{proof}

We say that $h:\Z_{\geq 0}\times \YY^N\to \R_{\geq 0}$ solves the general parameter version of the {\it true evolution equation} with initial data $h_0(\vec{y})$ if:
\begin{enumerate}
\item for all $\vec{y}\in \YY^N$ and $t\geq 0$, $h(t+1;\vec{y}) = P_{t+1}^{{\rm Boson}} h(t;\vec{y})$;
\item for all $\vec{y}\in \YY^N$, $h(0;\vec{y}) = h_0(\vec{y})$.
\end{enumerate}

\begin{corollary}\label{c.truegen}
For any fixed $\vec{x}\in \XX^N$,
$$h(t;\vec{y}):= \EE^{\vec{x}}\big[H(x(t),y) \big]=\EE^{\vec{x}}\Big[\prod_{i=0}^{N} q^{y_i(x_i+i)}\Big]$$
is the unique solution to the general parameter version of the true evolution equation with initial data
$h_0(\vec{y}):= \prod_{i=0}^{N} q^{y_i(x_i+i)}$.
\end{corollary}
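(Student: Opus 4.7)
The plan is to verify the two defining conditions of the general parameter true evolution equation for $h(t;\vec{y}) := \EE^{\vec{x}}[H(\vec{x}(t),\vec{y})]$, using Theorem \ref{t.genintertwine} as the central tool, and then to note that uniqueness is automatic because the recursion $h(t+1;\vec{y}) = P_{t+1}^{{\rm Boson}} h(t;\vec{y})$ together with an initial condition determines $h$ term by term. The initial condition is immediate: at $t=0$, $\vec{x}(0) = \vec{x}$ deterministically, so $h(0;\vec{y}) = H(\vec{x},\vec{y}) = \prod_{i=0}^{N} q^{y_i(x_i+i)} = h_0(\vec{y})$.

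For the evolution identity, I would condition on $\vec{x}(t)$ and apply the Markov property. For fixed $\vec{y}$, set $H_{\vec{y}}(\vec{x}) := H(\vec{x},\vec{y})$; then
\begin{equation*}
\EE^{\vec{x}}\bigl[H(\vec{x}(t+1),\vec{y})\bigr] = \EE^{\vec{x}}\bigl[\EE[H_{\vec{y}}(\vec{x}(t+1))\mid \vec{x}(t)]\bigr] = \EE^{\vec{x}}\bigl[(P_{t+1}^{{\rm TASEP}} H_{\vec{y}})(\vec{x}(t))\bigr].
\end{equation*}
Theorem \ref{t.genintertwine} rewrites this as
\begin{equation*}
(P_{t+1}^{{\rm TASEP}} H_{\vec{y}})(\vec{x}(t)) = \sum_{\vec{y}'} H(\vec{x}(t),\vec{y}')\, P_{t+1}^{{\rm Boson}}(\vec{y},\vec{y}'),
\end{equation*}
which is a finite sum because the $(q,\mu,\nu)$-Boson process conserves the number of particles, so from $\vec{y}$ it reaches only states $\vec{y}'$ in the same $\YY^N_k$. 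Taking $\EE^{\vec{x}}$ inside the finite sum yields
\begin{equation*}
h(t+1;\vec{y}) = \sum_{\vec{y}'} P_{t+1}^{{\rm Boson}}(\vec{y},\vec{y}')\, h(t;\vec{y}') = \bigl(P_{t+1}^{{\rm Boson}} h(t;\cdot)\bigr)(\vec{y}),
\end{equation*}
which is condition (1).

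The only delicate point is justifying the interchange of expectation with the $\vec{y}'$-sum and ensuring everything is finite. This is handled by the particle-number conservation of the Boson process (the sum ranges over a finite set $\YY^N_k$ where $k = \sum_i y_i$) together with the fact that each $H(\vec{x}(t),\vec{y}')$ is a finite product of $q$-powers, so each term is bounded for fixed $t$ even though $H$ itself need not be uniformly bounded in $\vec{x}$. I do not anticipate a serious obstacle here; the proof is essentially a textbook Markov-duality argument, with the substantive work already absorbed into Theorem \ref{t.genintertwine}.
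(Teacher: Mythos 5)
Your proposal is correct and follows essentially the same route as the paper: the evolution identity is exactly the ``clear from Theorem \ref{t.genintertwine}'' step (which you usefully spell out via conditioning and the finiteness of the $\vec{y}'$-sum from particle conservation), and your uniqueness argument rests on the same observation the paper makes, namely that $P_{t+1}^{{\rm Boson}}$ restricts to a finite matrix on each $\YY^N_k$ so the recursion determines $h$ inductively in $t$.
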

\begin{proof}
It is clear from Theorem \ref{t.genintertwine} that $h(t;\vec{y})$ solves the general parameter version of the true evolution equation with initial data as given. To show the uniqueness of solutions to the true evolution equation observe first that the matrix $P_{t+1}^{{\rm Boson}}$ splits into a direct sum of matrices acting only on the subspace $\YY^N_k$, over $k\geq 0$. Each of these matrices is finite and triangular in the sense that $\vec{y}\in \YY^N_k$, $h(t+1;\vec{y})$ depends only upon the values of $h(t;\vec{y}')$ for those $\vec{y}'$ such that for all $i\in \{0,\ldots, N\}$, $y_i'+\cdots + y_N' \leq y_i+\cdots +y_N$. This clearly implies uniqueness (and existence) of solutions.
\end{proof}

Theorem \ref{t.genintertwine} also implies a form of Markov duality, though this requires employing a time reversal of one of the chains. Since this duality is not utilized, we forego stating it.

\subsection{Equivalence of true and free evolution equations}\label{s.bethegen}
The following theorem is a general parameter analog of Proposition \ref{p.freeevo}. The proof is essentially given in \cite[Section 3]{Pov} (though that is stated for all $a_i\equiv 1$) and relies upon (\ref{e.binexp}) a three parameter generalization of the Binomial expansion.

\begin{proposition}\label{p.freeevogen}
Fix $|q|<1$, $\nu\in [0,1)$, and integers $N,k\geq 1$. Further, fix site/particle dependent jump parameters $a_i>0$ for all $i\in \{1,\ldots, N\}$, and time dependent jump parameters $\mu_t\in[\nu,1)$ for all $t\in \Z_{>0}$. If $u:\R_{\geq 0} \times \Z^k \to \C$ solves:
\begin{enumerate}
\item {\em ($k$-particle free evolution equation)} for all $\vec{n}\in \Z^k$ and $t\geq 0$
$$
u(t+1;\vec{n}) = \prod_{i=1}^{k} \big[\nabla_{a_{n_i}\mu_{t+1},\nu}\big]_i u(t;\vec{n})
$$
where $[\nabla_{\mu,\nu}]_i u(t;\vec{n}) := \frac{\mu-\nu}{1-\nu} u(t;\vec{n}_{i}^{-}) + \frac{1-\mu}{1-\nu} u(t;\vec{n})$;
\item {\em ($k-1$ two-body boundary conditions)} for all $\vec{n}\in \Z^k$ such that for some $i\in \{1,\ldots, k-1\}$, $n_i= n_{i+1}$ and all $t\geq 0$,
$$
\alpha u(t; n_{i,i+1}^{-}) +\beta u(t;\vec{n}_{i+1}^{-}) + \gamma u(t;\vec{n}) -u(t;\vec{n}_i^{-}) = 0
$$
where the parameters $\alpha,\beta,\gamma$ are defined in terms of $q$ and $\nu$ as
$$
\alpha = \frac{\nu(1-q)}{1-q\nu},\qquad \beta= \frac{q-\nu}{1-q\nu},\qquad \gamma = \frac{1-q}{1-q\nu};
$$
\item {\em (initial data)} for all $\vec{n}\in \Weyl^N_k$, $u(0;\vec{n}) = h_0\big(\vec{y}(\vec{n})\big)$;
\end{enumerate}
then for all $\vec{n}\in \Weyl_k^N$, and all $t\geq 0$,
$
u(t;\vec{n}) = h\big(t;\vec{y}(\vec{n})\big)
$
where $h(t;\vec{y})$ is the solution to the general parameter version of the true evolution equation with initial data $h_0$.
\end{proposition}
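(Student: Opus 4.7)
The plan is to adapt the coordinate Bethe ansatz reduction of Povolotsky \cite{Pov}, whose original argument handles the $a_i\equiv 1$, $\mu_t\equiv \mu$ case. The strategy is to show that $u(t;\vec{n})|_{\vec{n}\in \Weyl_k^N}$ satisfies the $k$-particle version of the true evolution equation; combined with uniqueness of solutions (which follows from exactly the triangularity argument used in the proof of Corollary \ref{c.truegen}, noting that $P_{t+1}^{{\rm Boson}}$ preserves each $\YY^N_k$ and is triangular in the partial order on partial sums $y_i+\cdots+y_N$), this identifies $u$ with $h\big(t;\vec{y}(\vec{n})\big)$ on the Weyl chamber.

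First I would rewrite the one-step transition $P_{t+1}^{{\rm Boson}}$ in the $\vec{n}$-coordinates. For $\vec{n}\in \Weyl_k^N$ the update $\vec{y}(t)\to \vec{y}(t+1)$ is obtained by choosing, independently at each site $i$, a number $s_i\in\{0,\ldots,y_i\}$ of particles to shift from $i$ to $i-1$ with probability $\varphi_{q,a_i\mu_{t+1},\nu}(s_i|y_i)$. In $\vec{n}$-coordinates this is a sum, over subsets of indices grouped by common value $n_j=i$, of a term shifting exactly those indices' coordinates by $-1$. Thus the action of $P_{t+1}^{{\rm Boson}}$ on $v(t;\vec{n}):=u(t;\vec{n})|_{\Weyl_k^N}$ may be written explicitly as a weighted sum of $u$-values at points $\vec{n}^{-}_I$, where the weights involve the $\varphi_{q,a_i\mu_{t+1},\nu}$ coefficients grouped by coincidence blocks of $\vec{n}$.

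Second, I would expand the free evolution operator $\prod_{i=1}^{k}\big[\nabla_{a_{n_i}\mu_{t+1},\nu}\big]_i$ applied to $u(t;\cdot)$: this gives a sum over subsets $I\subseteq\{1,\ldots,k\}$ with weight $\prod_{i\in I}\tfrac{a_{n_i}\mu_{t+1}-\nu}{1-\nu}\prod_{i\notin I}\tfrac{1-a_{n_i}\mu_{t+1}}{1-\nu}$ times $u(t;\vec{n}^{-}_I)$. For $\vec{n}$ in the interior of $\Weyl^N_k$ (all $n_i$ distinct), these values already lie in $\Weyl^N_k$, and direct computation matches each coefficient to $\varphi_{q,a_i\mu_{t+1},\nu}(1|1)$ per site, which agrees with $P_{t+1}^{{\rm Boson}}$ since only single-particle sites occur.

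The main obstacle, and the key technical step, is the case when $\vec{n}$ contains coincidences. Here the free-evolution sum contains $u$-values $u(t;\vec{n}^{-}_I)$ with $\vec{n}^{-}_I$ outside $\Weyl_k^N$, and the wrong site-wise factorized coefficients appear (the Bernoulli product over independent single-particle decisions rather than the $\varphi_{q,\mu,\nu}(s|m)$ for a cluster of size $m$). This is precisely where the two-body boundary conditions enter: at a coincident pair $n_i=n_{i+1}$ the relation with coefficients $(\alpha,\beta,\gamma)$ given in the statement lets us eliminate the out-of-chamber term $u(t;\vec{n}_i^{-})$ in favor of $u(t;\vec{n}^{-}_{i,i+1})$, $u(t;\vec{n}_{i+1}^{-})$ and $u(t;\vec{n})$, all of which lie in $\Weyl_k^N$. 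Iterating this over a block of $m$ coincident particles, the resulting linear combination must reproduce the $\varphi_{q,a_i\mu_{t+1},\nu}(s|m)$ weights. This combinatorial identity is exactly the three-parameter generalization of the binomial expansion alluded to as (\ref{e.binexp}); I would either derive it by induction on $m$ using the explicit form (\ref{e.jumpdist}) of $\varphi_{q,\mu,\nu}$ together with $q$-Pochhammer manipulations from (\ref{e.gasp}), or invoke it as the identity established in \cite[Section 3]{Pov}. The only modification needed for the general parameter case is that all $\mu$'s appearing at site $i$ are replaced by $a_i\mu_{t+1}$; because both the boundary condition coefficients $\alpha,\beta,\gamma$ and the combinatorial identity treat $\mu$ as a free parameter (the boundary conditions in fact do not involve $\mu$ at all), this substitution goes through unchanged, and the Povolotsky argument applies site by site and time step by time step.
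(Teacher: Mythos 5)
Your proposal is correct and follows essentially the same route as the paper: restrict $u$ to the Weyl chamber, show it satisfies the true evolution equation by using the two-body boundary conditions to convert the factorized free-evolution weights on each coincidence block into the $\varphi_{q,a_i\mu_{t+1},\nu}(\cdot|m)$ cluster weights via the non-commutative binomial identity (\ref{e.binexp}) of Povolotsky, and conclude by the uniqueness established in Corollary \ref{c.truegen}. The paper packages the block computation as Lemma \ref{l.povred} and likewise cites \cite{Pov} for the identity, and it makes the same observation you do that the boundary conditions are independent of $\mu$, so the site- and time-dependent parameters pass through unchanged.
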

\begin{proof}
This result is essentially contained in \cite[Section 3]{Pov}. In order to prove it, it suffices to show $\tilde{h}(t;\vec{y}):= u\big(t;\vec{n}(\vec{y})\big)$ solves the general parameter version of the true evolution equation with initial data $h_0$, when restricted to $\vec{y}\in \YY^N_k$. When $k=1$ this is quite evident, though when $k>1$ it becomes necessary to utilize the two-body boundary conditions. This is because the free evolution equation expresses $u(t+1;\vec{n})$ for $\vec{n}\in \Weyl^N_k$ in terms of $u(t;\vec{n}')$ with some $\vec{n}'$ which may not be in $\Weyl^N_k$. Specifically, this happens when there are clusters of equal coordinates in $\vec{n}$, in which case the boundary condition satisfied by $u$ enables us to reexpress $u(t+1;\vec{n})$ in terms of $u(t;\vec{n}')$ with all $\vec{n}'\in \Weyl^N_k$. This is facilitated by the following result.
\begin{lemma}\label{l.povred}
If a function $f:\Z^m\to \R$ satisfies the boundary conditions
$$
\alpha u(t; n_{i,i+1}^{-}) +\beta u(t;\vec{n}_{i+1}^{-}) + \gamma u(t;\vec{n}) -u(t;\vec{n}_i^{-}) = 0,
$$
for all $\vec{n}\in \Z^k$ such that for some $i\in \{1,\ldots, m-1\}$, $n_i=n_{i+1}$, then it also satisfies
$$\prod_{i=1}^{m} \big[\nabla_{\mu,\nu}\big]_i f(n,\ldots, n) = \sum_{j=0}^{m} \varphi_{q,\mu,\nu}(j|m) f(\underbrace{n,\ldots,n}_{m-j},\underbrace{n-1,\ldots, n-1}_{j}).$$
\end{lemma}
\begin{proof}
This is shown in Section 3.1 of \cite{Pov} via the following generalization of the Binomial expansion. Consider an associative algebra generated by $A,B$ obeying the quadratic homogeneous relation
$$
BA= \alpha AA + \beta AB + \gamma BB.
$$
Then for $p = \frac{\mu-\nu}{1-\nu}$,
\begin{equation}\label{e.binexp}
\big(p A + (1-p)B\big)^m = \sum_{j=0}^{m}  \varphi_{q,\mu,\nu}(j|m) A^j B^{m-j}.
\end{equation}
\end{proof}
Lemma \ref{l.povred} may be applied to each cluster of equal elements in $\vec{n}$, starting with the cluster including $n_k$ and ending with the cluster including $n_1$. The repeated application of the lemma shows that
$$
u(t+1;\vec{n})= \prod_{i=1}^{k} \big[\nabla_{a_{n_i}\mu_{t+1},\nu}\big]_i u(t;\vec{n}) = \big[\mathcal{A}_{q,a_1\mu_{t+1},\nu}\big]_1\cdots \big[\mathcal{A}_{q,a_N\mu_{t+1},\nu}\big]_N \tilde{h}(t;\vec{y})=P_t^{{\rm Boson}}\tilde{h}(t;\vec{y}).
$$
Since the initial data matches, this shows (by the uniqueness of solutions to the true evolution equation) that $\tilde{h}(t;\vec{y}) = h(t;\vec{y})$
\end{proof}

\subsection{Nested contour integral and Fredholm determinant formulas}\label{s.nogennest}

Theorem \ref{t.moments} has a straightforward analog for the general parameter version of the $N$-particle $(q,\mu,\nu)$-TASEP if all $a_i\equiv 1$ (though the $\mu_t\in [\nu,1)$ by vary). The only difference in the nested contour integral formula is the replacement
$$
\left(\frac{1-\mu z_j}{1-\nu z_j}\right)^t \,\mapsto\, \prod_{s=1}^{t} \frac{1-\mu_s z_j}{1-\nu z_j}.
$$
Using this minor modification, Theorem \ref{t.fred} can likewise be modified by changing the definition of the function $g(w)$ by the replacement
$$
\left(\frac{(\mu w;q)_{\infty}}{(\nu w;q)_{\infty}}\right)^t \, \mapsto \prod_{s=1}^{t} \frac{(\mu_s w;q)_{\infty}}{(\nu w;q)_{\infty}}.
$$

It is not clear how to construct a similar sort of nested contour integral solution when the $a_i$ are not all equal. A similar difficulty arises in the context of the ASEP with bond dependent jump parameters where \cite{BCS} shows that duality and a reduction of the associated true evolution equation to free evolution equation with boundary conditions holds, yet there is no clear nested contour integral formulas for moments.

However, if the parameter $\nu=0$, then \cite[Theorem 2.1(2)]{BCdiscrete} did find general $a_i$ and $\mu_t$ parameter nested contour integral solutions given by the following replacement of terms in the integrand of right-hand side of (\ref{lhsthm}):
$$
\prod_{j=1}^{k} \left(\frac{1-\nu z_j}{1-z_j}\right)^{n_j} \left(\frac{1-\mu z_j}{1-\nu z_j}\right)^t \frac{dz_j}{z_j(1-\nu z_j)} \mapsto
\prod_{j=1}^{k} \prod_{i=1}^{n_j}\frac{a_i}{a_i-z_j}\prod_{s=1}^{t} (1-\mu_s z_j) \frac{dz_j}{z_j}.
$$
See Section \ref{s.mac} for figure discussion.

\subsection{Proof of the $(q,\mu,\nu)$-deformed Binomial distribution identity}\label{s.binidentityproof}

The proof of Proposition \ref{p.dualitymatch} which we now present is a modification of that of \cite[Lemma 3.7]{BCdiscrete}. That case corresponds with setting $\nu=0$. This leads to a simplification since the desired equality (\ref{e.toprovelhs}) within the below proof becomes
$$
\sum_{r=0}^{\ell} (-1)^r q^{\frac{r(r-1)}{2}} \frac{(q;q)_{y}}{(q;q)_r (q;q)_{y-r}} = \begin{cases} 1& \textrm{if } \ell=0\\ 0 &\textrm{otherwise}.\end{cases}
$$
That identify is \cite[Corollary 10.2.2(c)]{AAR}. The present proof requires further manipulations and ultimately appeals to the more general identity in (\ref{e.qgauss}) of Heine's 1847 $q$-generalization of Gauss' summation formula for the $q$-hypergeometric series $_2\phi_1$.

\begin{proof}[Proof of Proposition \ref{p.dualitymatch}]
We prove the first identity of the proposition, as the second one follows similarly (or through taking $m\to +\infty$). Define
\begin{equation}\label{e.sm}
S_{m,y} = \sum_{j=0}^{m} \varphi_{q,\mu,\nu}(j|m) q^{jy}.
\end{equation}
To prove the lemma we must show that $S_{m,y} = S_{y,m}$ for all $y,m$. In order to show this we will first prove that for each $m\geq 0$ there exists a lower triangular matrix $T^m = \{T^m_{i,j}:i,j\geq 0\}$ such that
\begin{equation}\label{e.tm}
T^m \big(S_{m,0},S_{m,1},\ldots\big)^\top = (1,1,\ldots)^\top.
\end{equation}
Since $T^m$ is lower triangular, this relation uniquely characterizes the elements of $S_{m,\cdot}$. Therefore, to show that $S_{m,y}=S_{y,m}$ it suffices to prove that
\begin{equation}\label{e.tmswitch}
T^m \big(S_{0,m},S_{1,m},\ldots\big)^\top = (1,1,\ldots)^\top.
\end{equation}

We start by finding $T^m$ so that (\ref{e.tm}) holds. Applying Lemma \ref{l.normalized} with $\mu,\nu$ replaced by $q^y\mu,q^y\nu$ we find that
\begin{equation}\label{e.yexp}
\frac{(\nu;q)_y}{(\mu;q)_y\, (\nu q^m;q)_y}\, \sum_{j=0}^{m}  \varphi_{q,\mu,\nu}(j|m)\,q^{yj}\, (\mu q^{m-j};q)_y = 1.
\end{equation}
In the above expression we have used that
$$
\varphi_{q,q^y \mu,q^y\nu}(j|m) = \varphi_{q,\mu,\nu}(j|m) q^{yj} \frac{(\nu;q)_y\, (\mu q^{m-j};q)_y}{(\mu;q)_y\, (\nu q^m;q)_y}.
$$
We may use the expansion
\begin{equation}\label{e.expandq}
(a;q)_y = \sum_{r=0}^{y} (-a)^r q^{\frac{r(r-1)}{2}} \frac{(q;q)_y}{(q;q)_r\, (q;q)_{y-r}}
\end{equation}
to rewrite (\ref{e.yexp}) as the identity
$$
\frac{(\nu;q)_y}{(\mu;q)_y\, (\nu q^m;q)_y}\, \sum_{r=0}^{y} (-\mu)^r q^{mr} q^{\frac{r(r-1)}{2}} \frac{(q;q)_y}{(q;q)_r\, (q;q)_{y-r}} S_{m,y-r}= 1,
$$
where $S_{m,y-r}$ is defined in (\ref{e.sm}). This identity can be rewritten in the form of (\ref{e.tm}) with $T^m$ read off from the above expression. In order to prove (\ref{e.tmswitch}) we must prove that
\begin{equation}\label{e.toprove}
\frac{(\nu;q)_y}{(\mu;q)_y\, (\nu q^m;q)_y}\, \sum_{r=0}^{y} (-\mu)^r q^{mr} q^{\frac{r(r-1)}{2}} \frac{(q;q)_y}{(q;q)_r\, (q;q)_{y-r}} S_{y-r,m}= 1
\end{equation}
holds ($S_{m,y-r}$ has been replaced by $S_{y-r,m}$).

The rest of this proof is devoted to showing (\ref{e.toprove}). After some minor manipulations to this desired identity, we will reduce it to an application of Heine's $q$-Gauss summation formula. We may use the expansion (\ref{e.expandq}) rewrite (\ref{e.toprove}) as
\begin{equation}\label{e.toprovenext}
\frac{(\nu;q)_y}{(\mu;q)_y}\, \sum_{r=0}^{y} (-\mu)^r q^{mr} q^{\frac{r(r-1)}{2}} \frac{(q;q)_y}{(q;q)_r\, (q;q)_{y-r}} S_{y-r,m}=  \sum_{r=0}^{y} (-\nu q^m)^r q^{\frac{r(r-1)}{2}} \frac{(q;q)_y}{(q;q)_r\, (q;q)_{y-r}}.
\end{equation}
We prove (\ref{e.toprovenext}) by matching coefficients of powers of $q^m$ on both sides of the desired identity. Expanding both sides of (\ref{e.toprovenext}) we can gather the coefficients of $(q^m)^\ell$ for $\ell=0,\ldots, y$. In order that all of the coefficients match, we must show that for all $\ell=0,\ldots, y$ we have
$$
\frac{(\nu;q)_y}{(\mu;q)_y}\, \sum_{r=0}^{\ell} (-\mu)^r q^{\frac{r(r-1)}{2}} \frac{(q;q)_y}{(q;q)_r\, (q;q)_{y-r}} \varphi_{q,\mu,nu}(\ell-r|y-r)=  (-\nu)^\ell q^{\frac{\ell(\ell-1)}{2}} \frac{(q;q)_y}{(q;q)_\ell\, (q;q)_{y-\ell}}.
$$
From the definition of $\varphi$ this is equivalent to showing that for all $\ell=0,\ldots, y$ we have
\begin{equation}\label{e.toprovelhs}
\sum_{r=0}^{\ell} (-1)^r q^{\frac{r(r-1)}{2}} \frac{(q;q)_y}{(q;q)_r\, (q;q)_{y-r}} \frac{(\nu/\mu;q)_{\ell-r}}{(\nu;q)_{y-r}} = (-\nu/\mu)^\ell q^{\frac{\ell(\ell-1)}{2}}  \frac{(\mu;q)_y}{(\nu;q)_y\, (\mu;q)_{y-\ell}}.
\end{equation}
Using the identity (\ref{e.gasp}(C))
we can rewrite the left-hand side of (\ref{e.toprovelhs}) as
\begin{align}
\textrm{LHS(\ref{e.toprovelhs})}&= \frac{(\nu/\mu;q)_{\ell}}{(\nu;q)_y} \sum_{r=0}^{\ell} \frac{(q^{-\ell};q)_r \, (\nu^{-1} q^{1-y};q)_{r}}{(q;q)_r\, (\mu \nu^{-1} q^{1-\ell};q)_r} (\mu q^y)^r\\
&=\frac{(\nu/\mu;q)_{\ell}}{(\nu;q)_y}  {_2\phi_1} (q^{-\ell},\nu^{-1} q^{1-y};\mu\nu^{-1} q^{1-\ell};q;\mu q^y).
\end{align}
The second equality follows from (\ref{d.hypphi}) and the fact that having $q^{-\ell}$ as an argument cuts the infinite summation in ${_2\phi_1}$ off for all $r>\ell$. Using this ${_2\phi_1}$ expression for the left-hand side of (\ref{e.toprovelhs}) we can rewrite (\ref{e.toprovelhs}) as
\begin{equation*}
{_2\phi_1} (q^{-\ell},\nu^{-1} q^{1-y};\mu\nu^{-1} q^{1-\ell};q;\mu q^y) = (-\nu/\mu)^\ell q^{\frac{\ell(\ell-1)}{2}} \frac{(\mu;q)_y}{(\nu;q)_y\, (\mu;q)_{y-\ell}}.
\end{equation*}
We must show this identity for all $\ell=0,\ldots, y$. Applinyg (\ref{e.qgauss}) with $a= q^{-\ell}$, $b = \nu^{-1} q^{1-y}$, and $c=\mu\nu^{-1} q^{1-\ell}$ reduces this desired identity to
$$
\frac{(\nu/\mu;q)_\ell}{(\nu;q)_y}\, \frac{(\mu \nu^{-1} q;q)_{\infty}\, (\mu q^{y-\ell};q)_{\infty}}{(\mu \nu^{-1} q^{1-\ell};q)_{\infty}\, (\mu q^y;q)_{\infty}} = (-\nu/\mu)^\ell q^{\frac{\ell(\ell-1)}{2}} \frac{(\mu;q)_y}{(\nu;q)_y\, (\mu;q)_{y-\ell}}
$$
which is easily confirmed by using the identities (\ref{e.gasp}(A)) and (\ref{e.gasp}(C)). This proves the identity (\ref{e.toprove}) and hence completes the proof of Proposition \ref{p.dualitymatch}.
\end{proof}

\section{Discussion of results, extensions, open problems and relation to literature}\label{s.discuss}

Without going into too much detail, we provide some discussion below with a focus towards possible extensions and new directions of research related to this work. We do not attempt a full survey the literature related to the present work or to these extension and new directions.

\subsection{Stationary version of the  $(q,\mu,\nu)$-Boson process and $(q,\mu,\nu)$-TASEP}\label{s.stat}

Evans-Majumdar-Zia \cite{Evans} characterized the jump distributions for spatially homogeneous discrete time zero range processes (called {\it mass transport models} in \cite{Evans} or {\it zero range chipping models} in \cite{Pov}) on  periodic domains which have factorized steady states (or in other words, invariant measures which are product measures). This class of processes involve moving $s_i$ out of $y_i(t)$ particles from site $i$ to $i-1$ at time $t+1$ (independently and in parallel for all $i$) according to a jump distribution $\varphi(s_i|y_i(t))$. Povolotsky \cite{Pov} sought to characterize those jump distributions $\varphi$ which additionally led to processes solvable via Bethe ansatz (see Sections \ref{s.betheant} and \ref{s.plan}) and found that $\varphi_{q,\mu,\nu}$ constitutes that set.

This paper primarily focuses on the $N$-site $(q,\mu,\nu)$-Boson process in which there are no invariant measures (eventually all particles move to site $0$). For the moment consider the model on $\Z$ with state space $(\Z_{\geq 0})^\Z$, so that a state $\vec{y} = \{y_i\}_{i\in \Z}$. Taking an infinite volume analog of the factorized steady states from \cite{Evans} we arrive at a class of product measures on $\vec{y}$ indexed by a parameter $\rho\in [0,1)$ in which for each $i\in \Z$,
\begin{equation}\label{e.pyi}
\PP(y_i=n) = \rho^n\, \frac{(\nu;q)_n}{(q;q)_n} \, \frac{(\rho;q)_{\infty}}{(\rho\nu;q)_\infty}.
\end{equation}
We speculate that these constitute invariant measures for the $(q,\mu,\nu)$-Boson process on $\Z$ (we do not confirm that here). It would be interesting to classify the full set of translation invariant stationary measures for this process.

The continuous time counterparts of the mass transport models in \cite{Evans} are the totally asymmetric zero range processes (TAZRPs) in which a single particle moves from site $i$ to $i-1$ with rate $g(y_i)$, independently and in parallel over all $i\in \Z$. The rate function $g$ plays an analogous role to the jump probability distribution, though in continuous time only one particle can jump (as opposed to clusters which can move in the discrete time models). Under very mild growth conditions on $g$, it is known that TAZRPs have invariant measures which are product measures \cite[Proposition 3.3.11]{BorCor} (with one point distribution related to the rate $g$). This should be compared to the discrete time processes in which very particular conditions on the jump distribution must be satisfied, as shown in \cite{Evans}.

Bal\'{a}zs-Komj\'{a}thy-Sepp\"{a}l\"{a}inen \cite{BKS} used {\it second class particle} and {\it coupling methods} to prove that a wide class of TAZRPs demonstrate cube root fluctuations (i.e. $t^{1/3}$) in their particle current through {\it characteristics}. This cube root behavior is an indication of membership in the KPZ universality (see the review \cite{ICReview}). It would be interesting to develop the methods used in \cite{BKS} to this discrete time setting and prove cube root fluctuations in this manner. Note, it may only be possible to implement this approach in the case of product form invariant measures.


\subsection{Plancherel theory and coordinate Bethe ansatz}\label{s.plan}

Utilizing the coordinate Bethe ansatz, Povolotsky \cite{Pov} constructed eigenfunctions for $k$-particle restriction of the $(q,\mu,\nu)$-Boson process transition matrix on a periodic domain and on $\Z$. Let us focus on the case of $\Z$. In this case, the eigenfunctions are indexed by $k$ complex numbers (sometimes called quasi-momenta). In order to solve the true evolution equation for a specific space of initial data, it is necessary to determine which subset of the eigenfunctions constitute a complete basis for the desired space and how these eigenfunctions should be normalized in such a decomposition. This problem goes under the general name of {\it completeness of the coordinate Bethe ansatz} and is achieved by proving a {\it Plancherel theory}. For the case of the $(q,\mu,\nu)$-Boson process with $\nu=0$ this has been achieved in \cite{BCPS}. It would be interesting to develop the analogous theory for general $\nu\neq 0$. Note that \cite[Conjecture 2]{Pov} provides a conjecture (which agrees with the $\nu=0$ case proved in \cite{BCPS}) for a portion of the desired results. One output of an analogous Plancherel theory to that of \cite{BCPS} would be a systematic and direct route to solve the $(q,\mu,\nu)$-Boson process true evolution equation for more general initial data.

\subsection{Algebraic Bethe ansatz}\label{s.aba}

The continuous time $q$-Boson process (see Section \ref{s.limitproc}) was introduced by Sasamoto-Wadati \cite{SasWad} in the language of the algebraic Bethe ansatz. The generator for the process arises from a certain representation of the $q$-Boson Hamiltonian, which is built (in a standard way) from quantum $L$ and $R$ matrices involving the $q$-Boson algebra. In principal the coordinate eigenfunctions produced in \cite{Pov} should be accessible from the algebraic Bethe ansatz (though this mapping of the algebraic to coordinate eigenfunctions have not been performed). The coordinate eigenfunctions of the $(q,\mu,\nu)$-Boson process are different than those for the $q$-Boson process (they depend non-trivially on $\nu$). This suggests that if one could fit the discrete time $(q,\mu,\nu)$-Boson process into the algebraic Bethe ansatz it would require use of a modified $L$ matrix. When $\nu=0$ the coordinate eigenfunctions  for the $(q,\mu,0)$-Boson process match those of the $q$-Boson process. However, even with $\nu=0$, it has not yet been determined how the $(q,\mu,0)$-Boson process arises from the algebraic Bethe ansatz transfer matrix. For a further discussion on this, see \cite[Section 1.2.4]{BCPS}.

\subsection{Measures on interlacing partitions and symmetric functions}\label{s.mac}

The nested contour integral moment formulas of Theorem \ref{t.moments} are reminiscent of formulas which arise in the theory of Macdonald processes \cite{BorCor}. When $\nu=0$, \cite[Section 6]{BCdiscrete} makes a very clear link between these formulas and Macdonald processes (as well as between the $q$-Boson process and commutation relations involving Macdonald first difference operators). This $\nu=0$ link is facilitated by the fact that there exist nested contour integral formulas for moments of the general $a_i$ parameter $(q;\mu;0)$-Boson process considered in Section \ref{s.genparsection}. As observed in Section \ref{s.nogennest}, it is not clear how to produce such formulas when $\nu\neq 0$ (or whether such formulas exist). Hence, it remains unclear whether there exists an analogous theory to that of Macdonald processes which relates exactly to the $(q,\mu,\nu)$-Boson process.

Also in the $\nu=0$ context, \cite{BCdiscrete} introduced another discrete time variant of $q$-TASEP (different than the $(q,\mu,0)$-TASEP and related to the so called  $\beta$ specialization of Macdonald processes). This process, called Bernoulli $q$-TASEP, was studied in \cite{BCdiscrete} via the same sort duality approach utilized herein. It is not clear whether there is a $\nu\neq 0$ generalization of this Bernoulli $q$-TASEP which is similarly solvable.

\subsection{Limits of the $(q,\mu,\nu)$-Boson process and $(q,\mu,\nu)$-TASEP}\label{s.limitproc}

We have already alluded to the $\nu=0$ degeneration of the $(q,\mu,\nu)$-Boson process and $(q,\mu,\nu)$-TASEP. This process coincides with the discrete time geometric $q$-TASEP (and the $q$-Boson process related through the ASEP-ZRP and particle-hole transform) which was studied in \cite{BCdiscrete}. The results of this paper generalize some of those in \cite{BCdiscrete} to this $\nu\neq 0$ setting. If we further set $\mu=(1-q)\e$ and scale time like $\e^{-1}$, then as $\e\to 0$ the discrete time geometric $q$-TASEP converges to the continuous time Poisson $q$-TASEP of \cite{BorCor,BCS}. A further limit (cf. \cite[Section 6]{BCS}) involving $q\to 1$ yields the O'Connell-Yor semi-discrete directed polymer model and a yet further limit (cf. \cite{MRQ} or \cite[Section 3]{BCF}) yields the Kardar-Parisi-Zhang equation.

Povolotsky \cite{Pov} describes quite a few other degenerations of the $(q,\mu,\nu)$-Boson process and $(q,\mu,\nu)$-TASEP including: a TASEP with generalized update, a continuous time fragmentation model, a multiparticle hopping asymmetric diffusion process which interpolated between TASEP and the drop-push model, a discrete time zero range process involving at most one particle jumping per-site, and the Asymmetric Avalanche Process -- see \cite{Pov} for the relevant degenerations, descriptions and references for these processes. It would be interesting to likewise degenerate Theorems \ref{t.moments} and \ref{t.fred} to these models and consequently study their long-time and large-scale behavior. For continuous time Poisson $q$-TASEP, the O'Connell-Yor semi-discrete directed polymer model and the Kardar-Parisi-Zhang equation this has been done in \cite{BorCor,BCF,BCFV, FV}.

Let us briefly illustrate one of the other degenerations mentioned above. The multiparticle hopping asymmetric diffusion process arises by setting $\mu=q\in (0,1)$, $\nu= \frac{q-\e}{1-\e}$ and rescaling time by a factor of $\e^{-1}$ (i.e. let $t= \e^{-1} \tau$ where $\tau$ will represent a continuous time parameter in the $\e\to 0$ limit). Straightforward asymptotics reveals that $\varphi_{q,\mu,\nu}(j|m) = \e\,\,\frac{1}{[j]_{q^{-1}}} + O(\e^2)$ where $[j]_{q^{-1}} = \frac{1-q^{-j}}{1-q^{-1}}$. (Note: \cite{Pov} contains a small mistake in the rate, which would correspond in the present notation to replacing $j$ by $m$.) Hence, the continuous time limit as $\e\to 0$ of the $(q,\mu,\nu)$-Boson process under this scaling is as follows: for each site $i$ and each $j\in \{1,\ldots, y_i(\tau)\}$ there is an exponential alarm clock which rings at rate $\frac{1}{[j]_{q^{-1}}}$. When this occurs, $j$ particles are moved from site $i$ to site $i-1$ and the alarm is reset. All alarms are independently distributed. The corresponding limit of $(q,\mu,\nu)$-TASEP has particle $x_n(\tau)$ jumping to site $x_n(\tau) + j$ for $j\in \{1,\ldots,x_{n-1}(\tau)-x_{n}(\tau)-1\}$ according to an exponential alarm with the rate $\frac{1}{[j]_{q^{-1}}}$.

Theorems \ref{t.moments} and \ref{t.fred} both have clear degenerations for this later model. Recall that we are presently considering the scaling $\mu=q$, $\nu= \frac{q-\e}{1-\e}$ and $t=\e^{-1} \tau$. Focusing on Theorem \ref{t.fred}, the only term which requires a bit of care as $\e\to 0$ is (in the definition of $g(w)$)
$$
\left(\frac{(\mu w;q)_{\infty}}{(\nu q;q)_{\infty}}\right)^{t} \to e^{\tau \sum_{i=0}^{\infty}\frac{q^i w}{1-q^{i+1} w}}.
$$
Thus Theorem \ref{t.fred} holds for the multiparticle hopping asymmetric diffusion process with
$$
g(w) = \left(\frac{1}{1-w}\right)^n e^{\tau \sum_{i=0}^{\infty}\frac{q^i w}{1-q^{i+1} w}} \frac{1}{(qw;q)_{\infty}}.
$$
We do not pursue further asymptotics of this process here.


\begin{thebibliography}{alpha}
\bibitem{AAR}
G.~Andrews, R.~Askey, R.~Roy.
\newblock {\it Special functions.}
\newblock Cambridge University Press, 2000.


\bibitem{BKS}
M.~Bal\'{a}zs, J.~Komj\'{a}thy, T.~Sepp\"{a}l\"{a}inen.
\newblock Microscopic concavity and fluctuation bounds in a class of deposition processes.
\newblock {\it Ann. Inst. H. Poincar\'{e} B}, {\bf 48}:151--187, 2012.


\bibitem{Bethe}
H.~Bethe.
\newblock Zur Theorie der Metalle. I. Eigenwerte und Eigenfunktionen der linearen Atomkette. (On the theory of metals. I. Eigenvalues and eigenfunctions of the linear atom chain)
\newblock {\it Zeitschrift fur Physik}, {\bf 71}:205--226, 1931.


\bibitem{BorCor}
A.~Borodin, I.~Corwin.
\newblock Macdonald processes.
\newblock {\it Probab. Theor. Rel. Fields}, to appear. arXiv:1111.4408.

\bibitem{BCdiscrete}
A.~Borodin, I.~Corwin.
\newblock Discrete time $q$-TASEPs.
\newblock {\it Int. Math. Res. Not.}, to appear. arXiv:1305.2972.

\bibitem{BCF}
A.~Borodin, I.~Corwin, P.~L.~Ferrari.
\newblock Free energy fluctuations for directed polymers in random media in $1+1$ dimension.
\newblock {\it Comm. Pure Appl. Math.}, to appear. arXiv:1204.1024.


\bibitem{BCFV}
A.~Borodin, I.~Corwin, P.~L.~Ferrari, B.~Vet\H{o}.
\newblock Height fluctuations for the stationary KPZ equation.
\newblock In preparation.

\bibitem{BCGS}
A.~Borodin, I.~Corwin, V.~Gorin, S.~Shakirov.
\newblock Observables of Macdonald processes.
\newblock {\it Trans. Amer. Math. Soc.}, to appear. arXiv:1306.0659.

\bibitem{BCPS}
A.~Borodin, I.~Corwin, L.~Petrov, T.~Sasamoto.
\newblock Spectral theory for the $q$-Boson particle system.
\newblock arXiv:1308.3475.

\bibitem{BCS}
A.~Borodin, I.~Corwin, T.~Sasamoto.
\newblock From duality to determinants for $q$-TASEP and ASEP.
\newblock {\it Ann. Probab.}, to appear. arXiv:1207.5035.

\bibitem{ICReview}
I. Corwin.
\newblock The {K}ardar-{P}arisi-{Z}hang equation and universality class.
\newblock {\em Random Matrices Theory Appl.}, {\bf 1}, 2012.


\bibitem{Evans}
M.~R.~Evans, S.~N.~Majumdar, R.~K.~P.~Zia.
\newblock Factorised steady states in mass transport models.
\newblock {\it J. Phys. A}, {\bf 37}:L275, 2004.

\bibitem{FV}
P.~L.~Ferrari, B.~Vet\H{o}.
\newblock Tracy-Widom asymptotics for q-TASEP.
\newblock arXiv:1310.2515.

\bibitem{Gaspar}
G.~Gaspar, M.~Rahman.
\newblock Basic hypergeometric series.
\newblock Cambridge university press, 1990.

\bibitem{MRQ}
G.~Moreno~Flores, J.~Quastel, D.~Remenik.
\newblock In preparation.

\bibitem{Pov}
A.M.~Povolotsky.
\newblock On integrability of zero-range chipping models with factorized steady state.
\newblock {\it J. Phys. A}, {\bf 46}:465205, 2013.

\bibitem{SasWad}
T.~Sasamoto, M.~Wadati.
\newblock Exact results for one-dimensional totally asymmetric diffusion models.
\newblock {\it J. Phys. A}, {\bf 31}:6057--6071, 1998.

\bibitem{Yudson}
V.~I.~ Yudson.
\newblock Dynamics of integrable quantum systems.
\newblock {\it Zh. Eksp. Teor. Fiz.}, {\bf 88}: 1757--1770, 1985.

\end{thebibliography}
\end{document}